\renewcommand{\Box}{\framebox{\rule{0.3em}{0.0em}}}
\newtheorem{thm}{Theorem}[section]
\newtheorem{proposition}{Proposition}[section]
\newtheorem{example}{Example}[section]
\newtheorem{defn}{Definition}[section]
\newtheorem{corollary}{Corollary}[section]
\newcommand{\bgeqn}{\begin{eqnarray}}
\newcommand{\edeqn}{\end{eqnarray}}
\newcommand{\bgeq}{\begin{eqnarray*}}
\newcommand{\edeq}{\end{eqnarray*}}
\newcommand{\bgc}{\begin{center}}
\newcommand{\edc}{\end{center}}
\renewcommand{\Box}{\hfill \rule{2.3mm}{2.3mm}}
\newenvironment{proof}{\noindent{\bf Proof. }}{\hfill $\Box$\medskip}
\title{Necessary optimality conditions for optimal control problems with nonsmooth mixed state and control  constraints}
\author{An Li\thanks{\baselineskip 9pt School of Mathematical Sciences, Xiamen University, Xiamen 361005, Fujian, China. The research of this author was partially
supported by the National Natural Science Foundation of China (Grant No. 11101342) and the Fundamental Research Funds for the Central Universities (Grant No. 2012121003, No. 20720150007).}
\ \ and \ \
Jane J. Ye\thanks{Corresponding author. Department of Mathematics and Statistics, University of Victoria, Victoria, B.C., Canada V8W 2Y2,
e-mail: janeye@uvic.ca.  The research of this author was partially supported by NSERC.}
}
\date{}
\begin{document}
\maketitle

\baselineskip 16pt

\vspace{4pt}{\bf Abstract.} \ In this paper we study an optimal control problem with nonsmooth mixed state and control constraints.
In most of the existing results, the necessary optimality condition for optimal control problems with mixed state and control constraints are derived under the Mangasarian-Fromovitz condition and under the assumption that the state and control constraint functions are smooth. In this paper we derive necessary optimality conditions for problems with nonsmooth mixed state and control constraints
under constraint qualifications based on pseudo-Lipschitz continuity and calmness of certain set-valued maps.  The necessary conditions are stratified, in the sense that they are asserted on precisely the domain upon which the hypotheses (and the optimality) are assumed to hold. Moreover  necessary optimality conditions with an Euler inclusion taking an explicit multiplier form are derived for certain cases.

\vspace{4pt}{\bf Key Words.}  \ optimal control, mixed state and control constraint, differential inclusion, necessary optimality
conditions, nonsmooth analysis, calmness.

\vspace{4pt}{\bf AMS subject classification: 45K15, 49K21,49J53}

\medskip

\baselineskip 16pt
\parskip 2pt



\maketitle

\baselineskip 16pt

\medskip

\baselineskip 16pt
\parskip 2pt

\newpage

\section{Introduction.}
Optimal control problems with standard cost and dynamics, but in which the state $x$ and control $u$ are subject to  joint or {\em mixed state and control constraints}  has long been known to be a challenging problem.
Optimality conditions for control problems with mixed state-control constraints have
been the focus of attention for a long time and remain an active research area; see \cite{cr,clarke,clar,cp,pinho1,pinho2,pinho3,pinho7, pinho4,pinho5,pinho6,ledyaev,dmitruk,dub,Hestenes,makowski,Milyutin,Neustadt,zeidan,schwar,Stefani,y01,y02}.
In a recent monograph \cite{cr}, Clarke derived some necessary optimality conditions for a general optimal  differential inclusion problem. These conditions
constitute a new state of the art in optimal control theory, subsuming, unifying, and substantially extending the results in the literature. Using the results from \cite{cr},  Clarke and De Pinho \cite{cp} developed  necessary optimality conditions for  optimal control problems with mixed state and control constraints. Their principal result  \cite[Theorem 2.1]{cp} is  a set of necessary optimality conditions obtained under a geometric hypothesis called the {\em bounded slope condition}  for the optimal control problem with mixed state and control in the form $(x(t), u(t)) \in S(t)$. These theorems  unify and significantly extend most of the existing results. Moreover the necessary conditions are stratified, in the sense that they are asserted on precisely the domain upon which the hypotheses (and the optimality) are assumed to hold.

 In recent years,  the study for optimization problems where the constraints are in the form
$\Phi(x)\in \Omega$, where $\Omega$ is a closed set,  has attracted many attentions. Such a constraint is very general. It includes the classical equality and inequality constraints, the cone constraints,
the variational inequality constraints,
the complementarity constraints and the matrix inequality constraints and is sometimes referred to as a geometric constraint; see   \cite{guolinye-stability,guoyezhang} and the reference within. Very recently, Pang and Stewart \cite{J-SPang} introduced a new class of dynamic problems called {\em differential variational inequalities} (DVIs) which provides a powerful modeling paradigm for many applied problems.  Motivated by the study of DVIs,
in this paper, we consider the   optimal control problem with mixed  state and control constraint in the form
$\Phi(x(t), u(t))\in \Omega (t) $. Such a constraint includes many models of mixed  state and control constraints as a special case, e.g. models with equality and inequality constraints. In particular it can be used to model a DVI.

 The necessary optimality conditions derived in \cite{cp} are all based on  the bounded slope condition and Mangasarian-Fromovitz condition (MFC).
 The  MFC is slightly stronger than the Mansasarian-Fromovitz constraint qualificaiton  (MFCQ) in   mathematical programming. Unfortunately for certain classes of important mathematical programs, MFCQ never holds. Such problems include  the  so-called mathematical program with equilibrium constraints (MPEC) which never satisfy the MFCQ (\cite[Proposition 1.1]{Jane-zhu-zhu-siopt1997}).  In recent years, there has been great progresses towards deriving necessary optimality conditions for mathematical programs under constraint qualifications weaker than MFCQ. To the best of our knowledge, in optimal control theory  all existing works in necessary optimality conditions
 for control problems with constraints require MFC. One of the main purpose of this paper is to fill this gap by deriving  necessary optimality conditions under  constraint qualifications weaker than  MFC.

 Another focus of this paper is to investigate the possibilities of deriving necessary optimality conditions for optimal control problems with nonsmooth mixed state and control constraints, i.e., the mapping $\Phi(x,u)$ is Lipschitz but nonsmooth.  To the best of our knowledge,  there are very few results in this area with the exception of \cite{cp,pinho-silva,ledyaev}.  Moreover we wish to derive a necessary optimality condition with an explicit multiplier for the constraint satisfying the Euler inclusion. The results in \cite{cp} only  show that the Euler inclusion has an explicit multiplier form for the case of smooth inequality and equality constraints, the result in \cite{pinho-silva} is in a weak form and the result in \cite{ledyaev} does not include the Euler inclusion.  In this paper   this issue  is resolved completely in the case of inequality constraints and partially resolved for other cases.

The paper is organized as follows. Section 2 contains preliminaries and preliminary results on variational analysis.
In Section 3, we propose  necessary optimality conditions for a $W^{1,1}$ local minimum of radius $R(\cdot)$ for optimal differential inclusion problems.
In Section 4, we derive  necessary optimality conditions for  a $W^{1,1}$ local minimum of radius $R(\cdot)$ for optimal control problems with a  geometric constraint using the results from Section 3.
In Section 5, we specialize the results from  Section 4 to the case of the classical mixed constraints with equalities and inequalities. We make concluding remarks in Section 6.

\section{Preliminaries and preliminary results}

In this section we present preliminaries and preliminary results on nonsmooth and variational analysis that will be needed in this paper. We give only concise definitions
and conclusions that will be needed in the paper.  For more detailed information on the subject  we refer the reader to  \cite{c,clsw,m2,rw}.

We denote by $B$ and $B(x,\delta)$  the open unit ball and the open ball centered at $x$ with radius $\delta>0$ respectively.
$\bar{\Omega}$ is the closure of a set $\Omega$.
Given a nonempty closed subset $\Omega \subset \Re^n$ and a point $\bar{x}\in \Omega$, we say that $\zeta\in \Re^n$ is a {\em proximal normal vector} to $\Omega$ at $\bar{x}$
if there exists $\sigma=\sigma(\zeta,\bar{x})\geq 0$ such that $\langle\zeta,x-\bar{x}\rangle\leq \sigma|x-\bar{x}|^2, \forall x\in \Omega$,
where $\langle a, b\rangle$ denotes the inner product of vectors $a$ and $b$, $|\cdot|$ denotes the Euclidean norm. The set of such $\zeta$,
denoted by $N^{P}_{\Omega}(\bar{x})$, is termed {\em the proximal normal cone} to $\Omega$.
The {\em limiting normal cone} $N^{L}_{\Omega}(\bar{x})$ to $\Omega$ is defined by
$$N^{L}_{\Omega}(\bar{x}):=\{\lim\zeta_{i}:\zeta_{i}\in N^{P}_{\Omega}(x_{i}),x_{i} \xrightarrow{ \Omega} {\bar{x}}\},$$
where $x_{i} \xrightarrow{ \Omega} {\bar{x}}$ means that $x_i\in \Omega$ and $x_i\rightarrow \bar{x}$.
Consider a lower semicontinuous function $f:\Re^n\rightarrow \Re\cup\{+\infty\}$ and a point $\bar{x}$ where $f$ is finite.
A vector $\zeta\in \Re^n$ is called a {\em proximal subgradient} of $f$ at $\bar{x}$ provided that there exist $\sigma,\delta>0$ such that
$$f(x)\geq f(\bar{x})+\langle\zeta,x-\bar{x}\rangle-\sigma|x-\bar{x}|^{2},\forall x\in B(\bar{x},\delta).$$
The set of such $\zeta$  is denoted $\partial^Pf(\bar{x})$ and referred to as the {\em proximal subdifferential}.
The {\em limiting subdifferential} of $f$ at $\bar{x}$  is the set
$$\partial^Lf(\bar{x}):=\{\lim\zeta_{i}:\zeta_{i}\in \partial^Pf(x_{i}),x_{i}\rightarrow \bar{x}, f(x_{i})\rightarrow f(\bar{x})\}.$$
For a locally Lipschitz function $f$ on $\Re^n$,  the generalized gradient $\partial^Cf(\bar{x})$ coincides with
$co\partial^Lf(\bar{x})$, the convex hull of $\partial^Lf(\bar{x})$; further the associated Clarke normal cone
$N^{C}_{\Omega}(\bar{x})$ at $\bar{x}\in \Omega$ coincides with $\overline{co}N^{L}_{\Omega}(\bar{x})$, the closure of the convex hull of
$N^{L}_{\Omega}(\bar{x})$.

Let  $\Psi: \Re^n\rightrightarrows\Re^q$ be an arbitrary set-valued map (assigning to each $z\in \Re^n$, a set
$\Psi(z)\subset \Re^q$ which may be  empty).  The graph  of $\Psi$ is denoted by  $gph\Psi$ (i.e., $gph \Psi:=\{(z,v): v\in \Psi(z)\}$). Let  $(\bar{z},\bar{v})\in \overline{gph \Psi}$. The set-valued map $D^*\Psi(\bar{z},\bar{v})$
from $\Re^q$ into $\Re^n$ defined by
$$D^*\Psi(\bar{z},\bar{v})(\eta)=\{\xi\in \Re^n:(\xi,-\eta)\in N_{gph\Psi}^{L}(\bar{z},\bar{v})\}$$
is called the coderivative of $\Psi$ at the point $(\bar{z},\bar{v})$. The symbol $D^*\Psi(\bar{z})$ is used when $\Psi$ is single-valued
at $\bar{z}$ and $\bar{v}=\Psi(\bar{z})$. In particular if $\Psi: \Re^n\rightarrow \Re^q$ is single-valued and Lipschitz near $\bar{z}$, then
$$D^*\Psi(\bar{z})(\eta)=\partial^L\langle\eta, \Psi \rangle (\bar{z})\quad \forall \eta \in \Re^q.$$

We now review some concepts of Lipschitz continuity of set-valued mappings.
\begin{defn}\cite{robin}
A set-valued map $\Psi: \Re^n\rightrightarrows \Re^q$ is said to be upper-Lipschitz at $\bar{z}$ if
there exist $\mu>0$ and a neighborhood $U$ of $\bar{z}$ such that
$$\Psi(z)\subset \Psi(\bar{z})+\mu|z-\bar{z}|\bar{B},  \,\,\forall z\in U.$$
\end{defn}
\begin{defn}\cite{aubin}\cite[Definition 1.40]{m2} \label{def2.2}
A set-valued map $\Psi: \Re^n\rightrightarrows \Re^q$ is said to be pseudo-Lipschitz (or locally Lipschitz like) around $(\bar{z},\bar{v})\in gph\Psi$ if there exist
 $\mu>0$ and a neighborhood $U$ of $\bar{z}$, a neighborhood $V$ of $\bar{v}$ such that
$$\Psi(z)\cap V\subset \Psi({z'})+\mu|z-{z'}|\bar{B},  \,\,\forall z,z'\in U.$$
The number $\mu$ satisfying the above inclusion is called a modulus and the infimum of all such moduli $\{\mu\}$ is called the exact Lipschitz bound of $\Psi$ around $(\bar{z},\bar{v})$ and is denoted by ${\rm lip}\Psi(\bar{z},\bar{v})$.
\end{defn}
\begin{defn} \cite{y05,rw}
A set-valued map $\Psi: \Re^n\rightrightarrows \Re^q$ is said to be calm at $(\bar{z},\bar{v})\in gph\Psi$ if
there exist $\mu>0$ and a neighborhood $U$ of $\bar{z}$, a neighborhood $V$ of $\bar{v}$ such that
$$\Psi(z)\cap V\subset \Psi(\bar{z})+\mu|z-\bar{z}|\bar{B},  \,\,\forall z\in U.$$
\end{defn}
Calmness is weaker than both
Aubin's pseudo-Lipschitz continuity \cite{aubin} and Robinson's upper-Lipschitz continuity \cite{robin}.
For recent discussion on the properties and the criterion of calmness of a set-valued map,
see (\cite{hen,adi}).

The following condition characterizes the pseudo-Lipschitz continuity of a set-valued map.
\begin{proposition}\label{prop2.0}\cite[Theorem 4.10 and Equation 1.22]{m2} Let  $\Psi: \Re^n\rightrightarrows \Re^q$ be a set-valued map with a closed graph. $\Psi$ is  pseudo-Lipschitz around $(\bar{z},\bar{v})\in gph\Psi$ if and only if one of the following conditions hold:
\begin{itemize}
\item[{\rm (a)}]  $D^* \Psi(\bar{z},\bar{v})(0)=\{0\}$;
\item[{\rm (b)}]  $(\alpha,\beta) \in N_{gph\Psi}^L(\bar z,\bar v)\Longrightarrow |\alpha| \leq \mu |\beta|$ for some $\mu>0$;
\item[{\rm (c)}] $\exists \varepsilon>0$ such that $(\alpha,\beta) \in N_{gph\Psi}^P(z, v), \  z\in B(\bar z, \varepsilon), \ v\in B(\bar v, \varepsilon) \Longrightarrow |\alpha| \leq \mu |\beta|$ for some $\mu>0$.
\end{itemize}
Moreover $\mu$ can be taken as ${\rm lip}\Psi(\bar{z},\bar{v})$, the exact Lipschitz bound of $\Psi$ around $(\bar{z},\bar{v})$.
\end{proposition}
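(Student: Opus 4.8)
The plan is to establish the three coderivative conditions as mutually equivalent ``soft'' reformulations of a single nonsingularity property, and then to prove that this property is equivalent to the pseudo-Lipschitz continuity of $\Psi$; the latter equivalence, together with the exact bound $\mu={\rm lip}\Psi(\bar z,\bar v)$, is the substantive part. First I would dispose of the equivalences (a)$\Leftrightarrow$(b)$\Leftrightarrow$(c). By the definition of the coderivative, $D^*\Psi(\bar z,\bar v)(0)=\{\xi:(\xi,0)\in N^L_{gph\Psi}(\bar z,\bar v)\}$, so (b)$\Rightarrow$(a) is immediate and (a)$\Rightarrow$(b) follows from a homogeneity--compactness argument: if (b) failed there would be $(\alpha_k,\beta_k)\in N^L_{gph\Psi}(\bar z,\bar v)$ with $|\alpha_k|=1$ and $|\beta_k|<1/k$; since $N^L_{gph\Psi}(\bar z,\bar v)$ is a closed cone, a convergent subsequence yields $(\alpha,0)\in N^L_{gph\Psi}(\bar z,\bar v)$ with $|\alpha|=1$, contradicting (a). For (c)$\Rightarrow$(b) I would pass to the limit in the defining sequences of $N^L$ as a limit of proximal normals; for (b)$\Rightarrow$(c) I would argue by contradiction, selecting along the negation points $(z_k,v_k)\to(\bar z,\bar v)$ and proximal normals $(\alpha_k,\beta_k)$ with $|\alpha_k|=1$, $|\beta_k|<1/k$, whose limit $(\alpha,0)$ lies in $N^L_{gph\Psi}(\bar z,\bar v)$ by the very definition of the limiting normal cone, again contradicting (a).

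Next I would prove the necessity ``pseudo-Lipschitz $\Rightarrow$ (c)'' by a direct proximal estimate, which simultaneously yields $\mu\le{\rm lip}\Psi(\bar z,\bar v)$. Fix a modulus $\ell>{\rm lip}\Psi(\bar z,\bar v)$ and neighborhoods $U,V$ as in Definition~\ref{def2.2}, take $(z_0,v_0)\in gph\Psi$ close to $(\bar z,\bar v)$ and $(\alpha,\beta)\in N^P_{gph\Psi}(z_0,v_0)$ with modulus $\sigma$. For $z$ near $z_0$ the pseudo-Lipschitz inclusion supplies $v\in\Psi(z)$ with $|v-v_0|\le\ell|z-z_0|$; substituting $(z,v)$ into the proximal inequality
\[
\langle\alpha,z-z_0\rangle+\langle\beta,v-v_0\rangle\le\sigma\big(|z-z_0|^2+|v-v_0|^2\big)
\]
and using $\langle\beta,v-v_0\rangle\ge-\ell|\beta|\,|z-z_0|$ gives $\langle\alpha,z-z_0\rangle\le\ell|\beta|\,|z-z_0|+\sigma(1+\ell^2)|z-z_0|^2$. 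Choosing $z-z_0=t\alpha/|\alpha|$ (the case $\alpha=0$ being trivial), dividing by $t$, and letting $t\to0^+$ yields $|\alpha|\le\ell|\beta|$. Hence (c) holds with any $\mu>{\rm lip}\Psi(\bar z,\bar v)$, and passing to the limit gives (b) and (a) with the same bound.

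The main obstacle is the sufficiency ``(a)$\Rightarrow$ pseudo-Lipschitz'', which I would prove by contradiction using Ekeland's variational principle. The pseudo-Lipschitz property of $\Psi$ around $(\bar z,\bar v)$ is equivalent to metric regularity of the inverse $\Psi^{-1}$ around $(\bar v,\bar z)$; assuming the latter fails, one extracts sequences $z_k\to\bar z$, $v_k\to\bar v$ along which the regularity estimate is violated. Applying Ekeland's principle to the function $z\mapsto\dist(v_k,\Psi(z))$, which is lower semicontinuous because $gph\Psi$ is closed, produces approximate minimizers; computing proximal subgradients of this distance function at those points and invoking the calculus of \cite{m2} yields elements of $N^P_{gph\Psi}$ whose limit is a nonzero pair $(\alpha,0)\in N^L_{gph\Psi}(\bar z,\bar v)$, contradicting (a). The delicate points here are the fuzzy subdifferential calculus needed to relate $\partial^P\dist(v_k,\Psi(\cdot))$ to the normal cone of $gph\Psi$, the normalization that forces the limit into the $\beta=0$ slice, and the quantitative bookkeeping that upgrades the qualitative equivalence to the exact bound $\mu={\rm lip}\Psi(\bar z,\bar v)$ asserted in the final sentence; for these I would follow the extremal-principle machinery of \cite{m2}.
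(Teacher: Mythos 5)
This proposition is not proved in the paper at all: it is quoted verbatim, with attribution, from Mordukhovich \cite[Theorem 4.10]{m2}, so your attempt has to be judged against the standard proof in that reference. Your handling of the equivalences (a)$\Leftrightarrow$(b)$\Leftrightarrow$(c) (normalization plus closedness of the cone $N^L_{gph\Psi}(\bar z,\bar v)$, and the definition of limiting normals as limits of proximal normals at nearby points) is correct, and your necessity argument is exactly the standard one: feeding the point $v\in\Psi(z)$ supplied by the Aubin inclusion into the proximal inequality, choosing $z-z_0=t\alpha/|\alpha|$, and letting $t\downarrow 0$ correctly yields $|\alpha|\le\ell|\beta|$ for every $\ell>{\rm lip}\Psi(\bar z,\bar v)$, which gives (c), then (b) and the exact-bound claim in the limit.

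The genuine gap is in the sufficiency step, and it is not a cosmetic one: you apply Ekeland's principle to the wrong function, so the argument as written proves a different theorem. Minimizing $z\mapsto \dist(v_k,\Psi(z))$ with an Ekeland penalty produces, at an approximate minimizer $\hat z$ with nearest point $\hat w\in\Psi(\hat z)$, $\hat w\neq v_k$, proximal normals $(\alpha,\beta)\in N^P_{gph\Psi}(\hat z,\hat w)$ whose $z$-component is small (bounded by the penalty coefficient) and whose $v$-component is close to the unit vector $(\hat w-v_k)/|\hat w-v_k|$; in the limit this yields $(0,\beta)\in N^L_{gph\Psi}(\bar z,\bar v)$ with $\beta\neq 0$. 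Such normals violate the condition ``$(0,\beta)\in N^L_{gph\Psi}(\bar z,\bar v)\Rightarrow\beta=0$'', which is the coderivative criterion for metric regularity of $\Psi$ itself (equivalently, the Aubin property of $\Psi^{-1}$) --- not condition (a). Condition (a) forbids normals of the form $(\alpha,0)$, and to manufacture those you must run the Ekeland argument on the inverse map: apply it to $v'\mapsto \dist\big(z_k,\Psi^{-1}(v')\big)$, equivalently minimize $|z-z_k|$ over $gph\Psi$ with the $v$-variable Ekeland-penalized. Then the roles reverse: the resulting proximal normals have small $v$-component and nearly unit $z$-component, and their limit is a nonzero $(\alpha,0)\in N^L_{gph\Psi}(\bar z,\bar v)$, contradicting (a) as you intended. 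Your own wording exposes the mismatch: you assert that your normals converge to a nonzero $(\alpha,0)$, but the function you chose can only deliver $(0,\beta)$. Once the roles are swapped, the remaining ingredients of your plan (lower semicontinuity of the distance function from closedness of $gph\Psi$, the sum rule for the Ekeland penalty, and the quantitative bookkeeping giving $\mu={\rm lip}\Psi(\bar z,\bar v)$) do go through along the lines of \cite{m2}.
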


 Consider a constrained system defined by a locally Lipschitz map $\Phi(x,u): \Re^n\times \Re^m\rightarrow \Re^d$
 and closed sets $U\subset \Re^m, \Omega\subset \Re^d$:
 $$\{(x,u): u \in U, \Phi(x,u) \in \Omega\}.$$
 Define a set-valued map as  the perturbed constrained system:
\begin{equation}
M(y):=\{(x,u): u\in U, \Phi(x,u) +y \in \Omega\}\label{perturbedmap}.
\end{equation}
For convenience, we summarize the well known criteria for the pseudo-Lipschitz continuity, upper-Lipschitz continuity and the calmness for the set-valued map $M$ in the following proposition.
\begin{proposition}\label{pseudoL} \cite{m2,robin1}
Let $(\bar{x},\bar{u}) \in M(0)$. Suppose that the no nonzero abnormal multiplier constraint qualification (NNAMCQ) holds at $(\bar x, \bar u)$:
\begin{eqnarray*}
\left \{ \begin{array}{l}
(0,0)\in \partial^L\langle \lambda, \Phi(\cdot,\cdot)\rangle(\bar{x},\bar{u})+\{0\}\times N_{U}^{L}(\bar{u})\\
\lambda\in N_{\Omega}^{L}(\Phi(\bar{x},\bar{u}))
\end{array} \right.  && \Longrightarrow\lambda=0.
\end{eqnarray*}Then the   set-valued  map $M$ defined as in (\ref{perturbedmap}) is
pseudo-Lipschitz continuous at $(0,\bar{x},\bar{u})\in gph M$ and hence is calm at $(0,\bar{x},\bar{u})\in gph M$.
If the mapping $\Phi$ is affine and the sets $U$ and $\Omega$ are unions of finitely many polyhedral convex sets, then the set-valued map $M$ is upper-Lipschitz continuous  at any $y\in \Re^d$ and hence is calm at  every point $(y, x,u)\in gph M$.
\end{proposition}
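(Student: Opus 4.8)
The plan is to establish the first assertion through the coderivative criterion for pseudo-Lipschitz continuity recorded in Proposition \ref{prop2.0}(a), i.e. by verifying that $D^*M(0,\bar x,\bar u)(0)=\{0\}$, and to obtain the second assertion from the polyhedral structure of $gph M$ together with Robinson's upper-Lipschitz theorem for polyhedral multifunctions. Since $\Phi$ is continuous and $U,\Omega$ are closed, $gph M$ is closed, so Proposition \ref{prop2.0} is applicable.

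For the first part, I would first write $gph M$ as a preimage: setting $G(y,x,u):=(\Phi(x,u)+y,u)$, one has $gph M=G^{-1}(\Omega\times U)$ with $G(0,\bar x,\bar u)=(\Phi(\bar x,\bar u),\bar u)$. The next step is to apply the limiting-normal-cone preimage (chain) rule to this representation. Its qualification condition is satisfied automatically here: because the perturbation enters through the free variable $y$, any $\lambda=(\lambda_1,\lambda_2)$ with $\lambda_1\in N_\Omega^L(\Phi(\bar x,\bar u))$, $\lambda_2\in N_U^L(\bar u)$ and $0\in\partial^L\langle\lambda,G\rangle(0,\bar x,\bar u)$ must already have $\lambda_1=0$ (reading off the $y$-component) and then $\lambda_2=0$ (reading off the $u$-component), so $\lambda=0$. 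Hence the rule yields
\begin{equation*}
N_{gph M}^L(0,\bar x,\bar u)\subset\bigcup\,\{\lambda_1\}\times\big[\partial^L\langle\lambda_1,\Phi\rangle(\bar x,\bar u)+\{0\}\times\{\lambda_2\}\big],
\end{equation*}
the union being taken over $\lambda_1\in N_\Omega^L(\Phi(\bar x,\bar u))$ and $\lambda_2\in N_U^L(\bar u)$; here I use that $\langle\lambda,G\rangle$ splits as the smooth term $\langle\lambda_1,y\rangle+\langle\lambda_2,u\rangle$ plus $\langle\lambda_1,\Phi(x,u)\rangle$, so its subdifferential factors through $\partial^L\langle\lambda_1,\Phi\rangle(\bar x,\bar u)$.

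With this in hand I would compute $D^*M(0,\bar x,\bar u)(0)$. If $(\xi,0,0)\in N_{gph M}^L(0,\bar x,\bar u)$, the displayed inclusion forces $\xi=\lambda_1$ from the $y$-component and $(0,0)\in\partial^L\langle\lambda_1,\Phi\rangle(\bar x,\bar u)+\{0\}\times N_U^L(\bar u)$ from the $(x,u)$-component, with $\lambda_1\in N_\Omega^L(\Phi(\bar x,\bar u))$. This is precisely the hypothesis of the NNAMCQ, which then gives $\lambda_1=0$, i.e. $\xi=0$. Therefore $D^*M(0,\bar x,\bar u)(0)=\{0\}$, and Proposition \ref{prop2.0}(a) yields the pseudo-Lipschitz property, with calmness immediate since pseudo-Lipschitz continuity implies calmness. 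The step I expect to require the most care is the justification of the preimage rule, namely confirming the closedness of $gph M$ and the automatic fulfilment of its qualification condition, together with the correct bookkeeping of the $y$-, $x$- and $u$-blocks when matching the $(x,u)$-component of the inclusion to the NNAMCQ.

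For the second part, when $\Phi$ is affine the map $G$ above is affine, and $\Omega\times U$ is a union of finitely many polyhedral convex sets; since the preimage of a polyhedral convex set under an affine map is polyhedral convex and finite unions are preserved, $gph M=G^{-1}(\Omega\times U)$ is a union of finitely many polyhedral convex sets. Thus $M$ is a polyhedral multifunction, and by Robinson's theorem \cite{robin1} every polyhedral multifunction is upper-Lipschitz at each point of $\Re^d$ with a common modulus. Upper-Lipschitz continuity at $y$ immediately gives calmness at every $(y,x,u)\in gph M$, which completes the proof.
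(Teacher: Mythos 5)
Your proof is correct, but it is worth noting that it is genuinely more detailed than the paper's own, which consists of two citations: the pseudo-Lipschitz criterion is attributed to Mordukhovich \cite{m2} without derivation, and the polyhedral/calmness assertion to Robinson \cite{robin1}. Your treatment of the second assertion coincides with the paper's route (observe that $gph M=G^{-1}(\Omega\times U)$ with $G$ affine is a finite union of polyhedral convex sets, so $M$ is a polyhedral multifunction, then invoke Robinson's upper-Lipschitz theorem and the trivial implication from upper-Lipschitz continuity to calmness). For the first assertion you effectively reconstruct the argument hidden inside the citation: writing $gph M=G^{-1}(\Omega\times U)$ with $G(y,x,u)=(\Phi(x,u)+y,u)$, observing that the qualification condition of the limiting-normal preimage rule is automatic because the perturbation $y$ enters linearly and freely (the $y$-component of the inclusion forces $\lambda_1=0$, after which the $u$-component forces $\lambda_2=0$), and then checking that the condition $D^*M(0,\bar{x},\bar{u})(0)=\{0\}$ of Proposition \ref{prop2.0}(a) unwinds exactly to the NNAMCQ. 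This is precisely the standard proof in the cited reference, and your bookkeeping of the $y$-, $x$- and $u$-blocks, the exact sum rule for a Lipschitz-plus-linear function, and the product formula $N^L_{\Omega\times U}=N^L_\Omega\times N^L_U$ are all applied correctly. What your version buys is self-containedness, and in particular it makes transparent the key structural point that the paper leaves implicit: for canonically perturbed constraint systems the preimage-rule qualification holds for free, so NNAMCQ alone suffices; the cost is reliance on the preimage rule and exact calculus rules for limiting normals, which the paper avoids by deferring to the literature.
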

\begin{proof}
The criterion for the pseudo-Lipschitz continuity is well known and can be found, for example in \cite{m2}.
The  criterion for the calmness  follows from the fact that the multifunction $M$ is a polyhedral multifunction and hence locally upper Lipschitz continuous at any point $y$  by the results of Robinson \cite{robin1}.
\end{proof}

Note that the NNAMCQ  is weaker than the generalized Mangasarian Fromovitz Constraint Qualification (GMFCQ)  but equivalent  if  the interior of the Clarke tangent cone to set $U$ is nonempty and $\Omega $ is a convex cone see e.g. \cite{con,y01new}.


Based on   condition (A) in Proposition \ref{prop2.0},  we can obtain a sufficient condition for pseudo-Lipschitz continuity of a set-valued map that has a special structure which will be useful later in our analysis. In fact this set-valued map is the autonomous form of the dynamics in the optimal control problem $(P_c)$ in Section 4. Note that this result is of independent interest.

\begin{proposition} \label{prop2.2} Let the set-valued map $\Gamma:\Re^n\rightrightarrows \Re^q$  be defined by
\begin{equation}
\Gamma(x):=\{\phi(x,u): u\in U, \,\Phi(x,u) \in \Omega\}\label{set-valued map}
\end{equation}
where $\phi:\Re^n\times \Re^m\rightarrow \Re^q$, $\Phi: \Re^n\times \Re^m\rightarrow \Re^d$ are locally Lipschitz continuous
 and $U\subset \Re^m, \Omega\subset \Re^d$ are closed.  Suppose that the  set-valued  map $M$ defined by (\ref{perturbedmap})
is calm at $(0,\bar{x}, \bar{u}) \in gph M$. Then $\Gamma$
is pseudo-Lipschitz around $(\bar{x},\phi(\bar{x}, \bar{u}))\in gph\Gamma$
 if the weak basic constraint qualification (WBCQ) holds at $(\bar x, \bar u)$:
\begin{eqnarray}
\left\{ \begin{array}{l} (\alpha,0)\in \partial^L\langle \lambda, \Phi(\cdot,\cdot)\rangle(\bar{x},\bar{u})+\{0\}\times N_{U}^{L}(\bar{u})\\
\lambda\in N_{\Omega}^{L}(\Phi(\bar{x},\bar{u}) )
\end{array} \right. \qquad \Longrightarrow\alpha=0. \label{cq1section2}
\end{eqnarray}
\end{proposition}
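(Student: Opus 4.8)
The plan is to verify criterion (a) of Proposition \ref{prop2.0}. Writing $\bar v:=\phi(\bar x,\bar u)$, and noting that $gph\,\Gamma$ is closed near $(\bar x,\bar v)$ (a consequence of the closedness of $U,\Omega$, the continuity of $\phi,\Phi$, and the calmness of $M$), it suffices to show
$$D^*\Gamma(\bar x,\bar v)(0)=\{0\},\qquad\text{i.e.}\qquad (\xi,0)\in N^L_{gph\,\Gamma}(\bar x,\bar v)\ \Longrightarrow\ \xi=0.$$
The whole argument then reduces to producing an upper estimate for $N^L_{gph\,\Gamma}(\bar x,\bar v)$ in terms of the data $\phi,\Phi,U,\Omega$ at $(\bar x,\bar u)$ and feeding the special normal $(\xi,0)$ into the WBCQ (\ref{cq1section2}).

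The first key step is to convert the assumed calmness of the perturbed feasibility map $M$ in (\ref{perturbedmap}) into a pointwise bound on the limiting normals of $M(0)=\{(x,u):u\in U,\ \Phi(x,u)\in\Omega\}$. By the standard fact that calmness of a canonically perturbed constraint system at $(0,\bar x,\bar u)$ yields a limiting-normal estimate, one obtains
$$N^L_{M(0)}(\bar x,\bar u)\ \subset\ \bigcup_{\lambda\in N^L_\Omega(\Phi(\bar x,\bar u))}\Big(\partial^L\langle\lambda,\Phi\rangle(\bar x,\bar u)+\{0\}\times N^L_U(\bar u)\Big),$$
where the term $\{0\}\times N^L_U(\bar u)=N^L_{\Re^n\times U}(\bar x,\bar u)$ accounts for the unperturbed hard constraint $u\in U$. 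Comparing this with (\ref{cq1section2}), any $(\xi,0)\in N^L_{M(0)}(\bar x,\bar u)$ must have $\xi=0$; equivalently, the feasible-control map $x\mapsto\{u\in U:\Phi(x,u)\in\Omega\}$, whose graph is precisely $M(0)$, is itself pseudo-Lipschitz around $(\bar x,\bar u)$.

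The second step transfers this to $\Gamma$, whose graph is the image of $M(0)$ under the Lipschitz map $(x,u)\mapsto(x,\phi(x,u))$. I would realize $gph\,\Gamma$ as the projection onto the $(x,v)$-coordinates of $\mathcal G:=\big(M(0)\times\Re^q\big)\cap C_2$, where $C_2:=\{(x,u,v):\phi(x,u)-v=0\}$, and then apply the limiting intersection rule. The qualification condition for that rule holds automatically: since $C_2$ is the graph of the Lipschitz map $\phi$, its limiting normals are $(a,b,-\eta)$ with $(a,b)\in\partial^L\langle\eta,\phi\rangle(\bar x,\bar u)$, so a vector common to $N^L_{C_2}$ and $-\big(N^L_{M(0)}(\bar x,\bar u)\times\{0\}\big)$ forces $\eta=0$ and hence $a=b=0$. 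Lifting a normal $(\xi,0)\in N^L_{gph\,\Gamma}(\bar x,\bar v)$ to $(\xi,0,0)\in N^L_{\mathcal G}(\bar x,\bar u,\bar v)$ and decomposing, the $v$-component forces $\eta=0$, so the $\phi$-contribution $\partial^L\langle 0,\phi\rangle=\{0\}$ drops out and one is left with $(\xi,0)\in N^L_{M(0)}(\bar x,\bar u)$; step one then gives $\xi=0$, as required.

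The delicate point — and the step I expect to be the main obstacle — is exactly the lifting through the projection that eliminates the control $u$. A limiting normal to $gph\,\Gamma$ at $(\bar x,\bar v)$ is generated by graph points $(x_i,v_i)\to(\bar x,\bar v)$, each witnessed by a control $u_i$ with $(x_i,u_i)\in M(0)$ and $\phi(x_i,u_i)=v_i$, and nothing in the data forces $u_i\to\bar u$, since distinct controls may produce velocities near $\bar v$. To legitimately anchor the estimate at $(\bar x,\bar u)$ one must show that the witnessing controls localize near $\bar u$, and it is here that the calmness of $M$ (equivalently, the local error bound it supplies for the system defining $M(0)$) has to be combined with a sequential-compactness argument on the $u_i$. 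Handling this existential quantification over $u$ carefully — rather than the essentially formal coderivative calculus of the preceding steps — is where the real work lies, since the localization of the control is precisely what makes the passage from the pseudo-Lipschitzness of the feasible-control map to that of $\Gamma$ go through.
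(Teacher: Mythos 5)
Your proposed architecture is, in substance, the paper's own: reduce via Proposition \ref{prop2.0}(a) to showing $D^*\Gamma(\bar x,\bar v)(0)=\{0\}$ for $\bar v=\phi(\bar x,\bar u)$; convert the calmness of $M$ into the estimate
$N^L_{M(0)}(\bar x,\bar u)\subset\bigcup_{\lambda\in N^L_\Omega(\Phi(\bar x,\bar u))}\bigl(\partial^L\langle\lambda,\Phi\rangle(\bar x,\bar u)+\{0\}\times N^L_U(\bar u)\bigr)$
(the paper does exactly this, citing \cite[Proposition 3.4]{adi} and \cite[Proposition 2.11]{morduk}); then let the WBCQ (\ref{cq1section2}) annihilate the horizontal normal. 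Your transfer of normals from $gph\,\Gamma$ to $M(0)$ through the intersection $(M(0)\times\Re^q)\cap gph\,\phi$ is only a repackaging of the paper's direct argument, in which the proximal normal inequality is read as saying that $\Lambda_i(x,u)=\langle-(\alpha_i,\beta_i),(x,\phi(x,u))\rangle+\sigma_i|(x,\phi(x,u))-(x_i,\phi(x_i,u_i))|^2$ attains a minimum over $M(0)$ at $(x_i,u_i)$; both routes yield the same inclusion.

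However, the step you flag and do not carry out --- forcing the witnessing controls $u_i$ to converge to $\bar u$ --- is a genuine gap, and it cannot be closed from the stated hypotheses, because calmness of $M$ at the single point $(0,\bar x,\bar u)$ and the WBCQ at $(\bar x,\bar u)$ say nothing about controls far from $\bar u$ that generate velocities near $\bar v$. Concretely, take $n=m=q=d=1$, $U=[-1,2]$, $\Omega=(-\infty,0]$, $\bar x=\bar u=0$, $\Phi(x,u)=\min\{|u|,\,\max\{x,\,|u-1|-1/4\}\}$, and $\phi(x,u)=-u/3$ for $u\le 3/4$, $\phi(x,u)=u-1$ for $u\ge 3/4$. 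Near $(0,0)$ one has $\Phi(x,u)=|u|$, so the WBCQ holds there (every element of $\partial^L\langle\lambda,\Phi\rangle(0,0)$ has zero $x$-component) and $M$ is calm at $(0,(0,0))$ with modulus $1$; yet $\Gamma(x)=[-1/4,1/4]$ for $x\le 0$ while $\Gamma(x)=\{0\}$ for $x>0$, so $\Gamma$ is not pseudo-Lipschitz around $(0,\phi(0,0))=(0,0)$. The offending graph points $(0,v)$, $v\neq 0$, are witnessed only by controls $u=1+v$ near $u^*=1$, where $\Phi$ reduces to $x$ and the WBCQ fails --- precisely the escape of witnesses you anticipated. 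You should also know that the paper's own proof silently makes the assumption you balked at: after fixing witnesses $u_i$ with $v_i=\phi(x_i,u_i)$, it ``passes to the limit'' to conclude $(\alpha,0)\in N^L_{M(0)}(\bar x,\bar u)$, which presupposes $u_i\to\bar u$. So your diagnosis of where the difficulty lies is correct, and sharper than the source; but neither your argument nor the paper's resolves it, and no argument can: a correct version of the proposition must impose the WBCQ and calmness at every feasible pair $(\bar x,u)$ with $\phi(\bar x,u)=\bar v$, together with a compactness condition ensuring the witnesses stay bounded, which is essentially how these conditions are invoked in Theorems \ref{thm4.2new} and \ref{thm4.3}.
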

\begin{proof}
By  Proposition \ref{prop2.0}(a), it suffices to verify that $D^* \Gamma(\bar{x},\phi(\bar{x}, \bar{u}))(0)=\{0\}$. By definition,
$\alpha \in D^* \Gamma(\bar{x},\phi(\bar{x}, \bar{u}))(0)$ is equivalent to  $(\alpha,0)\in N_{gph\Gamma}^{L}(\bar{x},\phi(\bar{x}, \bar{u}))$.
Let $(\alpha,0)\in N_{gph\Gamma}^{L}(\bar{x},\phi(\bar{x}, \bar{u}))$.
Then there exist $(x_{i},v_i)\xrightarrow{gph\Gamma}(\bar{x},\phi(\bar{x}, \bar{u}))$ and
$(\alpha_{i},\beta_{i})\rightarrow(\alpha,0)$ such that $(\alpha_{i},\beta_{i})\in N_{gph\Gamma}^{P}(x_{i},v_i)$.
 By the definition of a proximal normal vector, for each
$i$ there exists $\sigma_{i}\geq 0$ such that
$$\langle(\alpha_{i},\beta_{i}),(x,v)-(x_{i},v_i)\rangle\leq \sigma_{i}|(x,v)-(x_{i},v_i)|^{2} \quad \forall (x,v)\in gph \Gamma.$$
Since $v_i \in \Gamma(x_i)$ and $v\in \Gamma(x)$, the above implies that
 if $(x,u)\in M(0)$ we have
$$\langle(\alpha_{i},\beta_{i}),(x,\phi(x,u))-(x_{i},\phi(x_{i},u_{i}))\rangle\leq \sigma_{i}|(x,\phi(x,u))-(x_{i},\phi(x_{i},u_{i}))|^{2}.$$
It follows that the function
$$\Lambda_{i}(x,u):=\langle-(\alpha_{i},\beta_{i}),(x,\phi(x,u))\rangle+ \sigma_{i}|(x,\phi(x,u))-(x_{i},\phi(x_{i},u_{i}))|^{2}$$
has a  minimum at $(x_{i},u_{i})\in M(0)$.
Therefore by the local Lipschitz continuity of the function $\phi$, we have
$$(0,0)\in -(\alpha_i,0)+\partial^L\langle -\beta_i,\phi(\cdot,\cdot)\rangle(x_{i},u_{i})+N_{M(0)}^{L}(x_{i},u_{i}).$$
Passing to the limit, by virtue of the local Lipschitz continuity of $\phi$ and the closedness of the limiting normal cone mapping, we get  that
$(\alpha,0)\in N_{M(0)}^{L}(\bar{x}, \bar{u})$.
Since the set-value map $M(y)$ is calm at $(0,\bar{x}, \bar{u})$, we get that, by \cite[Proposition 3.4]{adi}
 there exists
$\lambda\in N_{\Omega}^{L}(\Phi(\bar{x}, \bar{u}))$ such that $(\alpha,0)\in D^{*}\Phi(\bar{x}, \bar{u})(\lambda)+\{0\}\times N_{U}^{L}(\bar{u})$.
 Since $D^{*}\Phi(\bar{x}, \bar{u})(\lambda)=\partial^L\langle \lambda, \Phi(\cdot,\cdot)\rangle(\bar{x}, \bar{u})$ by virtue of  \cite[Proposition 2.11]{morduk}, we get
$$(\alpha,0)\in \partial^L\langle \lambda, \Phi(\cdot,\cdot)\rangle(\bar{x}, \bar{u})+\{0\}\times N_{U}^{L}(\bar{u}).$$
Therefore, by the constraint qualification, we get $\alpha=0$. The proof is therefore complete.
\end{proof}

Following \cite{cp}, we say that Mangasarian-Fromovitz condition (MFC) holds at $(\bar x,\bar u)$ if
\begin{eqnarray}
\left\{ \begin{array}{l} (\alpha,0)\in \partial^L\langle \lambda, \Phi(\cdot,\cdot)\rangle(\bar{x},\bar{u})+\{0\}\times N_{U}^{L}(\bar{u})\\
\lambda\in N_{\Omega}^{L}(\Phi(\bar{x},\bar{u}) )
\end{array} \right. \qquad \Longrightarrow\lambda=0. \label{cq3section2}
\end{eqnarray}
According to \cite{cp}, we say that a calibrated constraint qualification (CCQ) holds at $(\bar x,\bar u)$ if there exists $\mu>0$ such that
\begin{eqnarray*}
\left\{ \begin{array}{l} (\alpha ,\beta)\in \partial^L\langle \lambda, \Phi(\cdot,\cdot)\rangle(\bar{x},\bar{u})+\{0\}\times N_{U}^{L}(\bar{u})\\
\lambda\in N_{\Omega}^{L}(\Phi(\bar{x},\bar{u}) )
\end{array} \right. \qquad \Longrightarrow |\lambda |\leq \mu |\beta|.
\end{eqnarray*}
It is easy to check that the following implications hold:
$$\mbox{CCQ} \Longrightarrow \mbox{MFC} \Longleftrightarrow \mbox{WBCQ+NNAMCQ}  \Longrightarrow \mbox{WBCQ + Calmness of $M$}.$$

We now  give some sufficient conditions under which the set-valued map $M$ is calm at $(0,\bar x,\bar u)$.
It is easy to check that the calmness condition of $M$ at $(0,\bar x, \bar u)$  holds if and only if  the set
$$M(0)=\{(x,u): u\in U, \Phi(x,u) \in \Omega\}$$
has a local error bound at $(\bar x,\bar u)$, i.e., there exists  nonnegative constant $\mu$ and $\delta>0$ and such that
$$dist_{M(0)} (x,u)\leq \mu dist_{\Omega} (\Phi(x,u))\quad \forall (x,u) \in B((\bar x, \bar u),\delta), u\in      U,$$
where $dist_\Omega(z)$ denotes the distance from $z$ to the set $\Omega$.
In particular when $\Omega=\Re^l_+\times \Re^s$ and $\Phi=(g,h)^T$, $M$ is calm at $(0,\bar x,\bar u)$ if and only if
 there exists a nonnegative constant $\mu$ and $\delta>0$ such that
\begin{eqnarray*}
dist_{M(0)} (x,u) &\leq& \mu (\max\{g_1(x,u),\dots,g_l(x,u),0\}+|h(x,u)|)\quad \\
&& \qquad \qquad \forall (x,u) \in B((\bar x, \bar u),\delta), u\in U.
\end{eqnarray*}
There are many conditions under which the local error bound holds (see e.g. Wu and Ye \cite{Wu-Ye01,Wu-Yemp,Wu-Ye03}). For example it holds when $g$ and $h$ are affine and the set $U$ is a union of polyhedral convex sets,
or if the $(\bar x,\bar u)$ is quasinormal (see \cite{guoyezhang}). We recall the following results for the existence of a global error bound (i.e. when $\delta=\infty$ in the error bound condition).
\begin{proposition}\label{prop2.4}\cite[Corollary 4.8]{Wu-Ye03} Let $z=(x,u)\in \Re^{n+m}$, $g_i(z):=\varphi_i(z)+b_i^T z, \ i=1,\dots, l$ where
$b_i=(b_{i1},\dots, b_{i(n+m)})\in \Re^{n+m}$. Suppose that there exists some $j\in \{1,\dots, n+m\}$ such that $b_{ij}, i=1,\dots, l$ have the same sign and $\varphi_i$ is a differentiable function that is independent of the $j$ th component of vector $z\in \Re^{n+m}$. Then
$$M(0):=\{z: g_i(z)\leq 0 \ i=1,\dots, l\}$$
has a global error bound for all $z\in \Re^{n+m}$.
\end{proposition}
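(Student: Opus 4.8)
The plan is to establish the global error bound by an explicit \emph{repair construction}: given an arbitrary point $z\in\Re^{n+m}$, I move it to a feasible point by shifting only its $j$-th coordinate, and I bound the distance travelled by the amount of constraint violation. The whole argument rests on the observation that, because the $b_{ij}$ ($i=1,\dots,l$) share a common sign and each $\varphi_i$ is independent of $z_j$, a single shift in the direction $e_j$ decreases \emph{every} constraint function simultaneously. First I would normalize the sign: after replacing the coordinate $z_j$ by $-z_j$ if necessary---an isometry of $\Re^{n+m}$ that preserves both $\dist_{M(0)}(\cdot)$ and the independence of each $\varphi_i$ from the $j$-th variable---I may assume $b_{ij}>0$ for all $i$, and I set $\beta:=\min_{1\le i\le l}b_{ij}>0$.

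Next I would fix an arbitrary $z\in\Re^{n+m}$ and, for $t\ge 0$, denote by $z(t)$ the point obtained from $z$ by replacing its $j$-th component $z_j$ with $z_j-t$ and leaving all other components unchanged. The key computation uses the hypothesis that $\varphi_i$ does not depend on $z_j$, so that $\varphi_i(z(t))=\varphi_i(z)$, while $b_i^T z(t)=b_i^T z-t\,b_{ij}$; hence
\begin{equation*}
g_i(z(t))=g_i(z)-t\,b_{ij},\qquad i=1,\dots,l.
\end{equation*}
Choosing
\begin{equation*}
t^{*}:=\max_{1\le i\le l}\frac{\max\{g_i(z),0\}}{b_{ij}}\ \ge 0,
\end{equation*}
I would verify feasibility of $z(t^{*})$: for each $i$ with $g_i(z)>0$ one has $t^{*}\ge g_i(z)/b_{ij}$, hence $g_i(z(t^{*}))\le 0$, while for each $i$ with $g_i(z)\le 0$ one has $g_i(z(t^{*}))=g_i(z)-t^{*}b_{ij}\le 0$ as well, since $t^{*}\ge 0$ and $b_{ij}>0$. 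Thus $z(t^{*})\in M(0)$, and because $z(t^{*})$ differs from $z$ only in the $j$-th coordinate,
\begin{equation*}
\dist_{M(0)}(z)\le |z-z(t^{*})|=t^{*}\le \frac{1}{\beta}\,\max\{g_1(z),\dots,g_l(z),0\}.
\end{equation*}
Since $z\in\Re^{n+m}$ was arbitrary, this is precisely a global error bound with modulus $\mu=1/\beta$ (and it incidentally shows $M(0)\ne\emptyset$, so no degenerate case arises).

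The conceptual crux---what I would flag as the main point rather than a computational obstacle---is that a \emph{single} coordinate direction must simultaneously drive all the violated constraints down while keeping the already-satisfied ones satisfied. The common-sign hypothesis on $b_{1j},\dots,b_{lj}$ is exactly what guarantees this monotone simultaneous decrease: with mixed signs, the shift that fixes one constraint would worsen another and no single coordinate move would work, so the hypothesis cannot be dropped. The requirement $b_{ij}>0$ together with $t^{*}\ge 0$ is what prevents pushing an inactive constraint into violation, and the independence of $\varphi_i$ from $z_j$ is what makes the effect of the shift on $g_i$ exactly linear and computable. I note that the computation uses only this independence, not the differentiability of $\varphi_i$, which is therefore inessential to the error-bound conclusion itself.
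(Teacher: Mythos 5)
Your construction is correct: the identity $g_i(z(t))=g_i(z)-t\,b_{ij}$ is exactly what the independence of $\varphi_i$ from the $j$-th coordinate gives, the choice of $t^{*}$ does make $z(t^{*})$ feasible, and the resulting bound $\dist_{M(0)}(z)\le \frac{1}{\beta}\max\{g_1(z),\dots,g_l(z),0\}$ with $\beta=\min_i b_{ij}$ is precisely a global error bound with explicit modulus $1/\beta$. Note, however, that the paper itself does not prove this proposition at all: it is quoted verbatim as Corollary 4.8 of Wu and Ye, \emph{First-order and second-order conditions for error bounds}, where it is obtained as a consequence of general first-order sufficient conditions for error bounds of inequality systems (subdifferential/directional-derivative criteria), rather than by a direct construction. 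So your argument is a genuinely different and more elementary route: it bypasses the error-bound machinery entirely, produces an explicit repair point and an explicit modulus, and, as you correctly observe, never uses differentiability of the $\varphi_i$ --- only their independence of $z_j$ --- so it actually proves a slightly stronger statement than the one cited. One small caution on hypotheses: your normalization ``$b_{ij}>0$ for all $i$'' tacitly reads ``same sign'' as ``nonzero with common sign,'' and this reading is essential --- if some $b_{ij}=0$ were allowed, then $\beta=0$, your $t^{*}$ is undefined, and the statement itself fails (e.g.\ $l=1$, $g_1(z)=\arctan(z_1)$, $j=2$: the residual stays bounded while the distance to $\{z_1\le 0\}\times\Re$ grows without bound), so it would be worth stating that interpretation explicitly at the outset.
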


In the special case when $U$ is the whole space $\Re^m$ and the functions $g,h$ are continuously differentiable, some of the recent developments in constraint qualifications
for nonlinear programming lead to  verifiable sufficient conditions for the existence of a local error bound. We now summarize these constraint qualifications. Denote by $z:=(x,u)$ and
\begin{equation}
M(0):=\{z\in \Re^{n+m}: g(z)\leq 0, h(z)=0\}\label{Mzero}
\end{equation}
where $g:\Re^{n+m}\rightarrow \Re^l, h:\Re^{n+m}\rightarrow \Re^s$ are continuous differentiable.

Recall that, for ${\cal A}:=\{a^1,\cdots,a^l\}$ and ${\cal B}:=\{b^1,\cdots,b^s\}$,
$({\cal A},{\cal B})$ is said to be {\it positively linearly dependent} iff there exist
$\alpha$ and $\beta$ such that $\alpha\geq0$, $(\alpha,\beta)\neq0$, and
$$\sum_{i=1}^l\alpha_ia^i+\sum_{j=1}^s\beta_jb^j=0.$$ Otherwise, $({\cal A},{\cal B})$ is said to be  {\it positively linearly independent}.
\begin{defn}\rm\label{mfcq---}
 Let $z_*\in M(0)$ defined as in (\ref{Mzero}). Denote the active set of $g$ at $z_* $ by $I_g^*:=\{i: g_i(z_*)=0\}$.
\begin{itemize}
\item[{\rm(i)}] We say that the {\it positive linear
independence constraint qualification} (PLICQ) holds at $z_*$ iff the family of
gradients $(\{\nabla g_i(z_*)\, | \ i\in {I}_g^*\},\{\nabla h_j(z_*)\, | \
j=1,\cdots,s\})$ is positively linearly independent.

\item[{\rm(ii)}] We say that the {\it constant positive linear
dependence} condition (CPLD) holds at $z_*$ iff, for each ${\cal I}\subseteq
{I}_g^*$ and ${\cal J}\subseteq \{1,\dots, s\}$, whenever $(\{\nabla g_i(z_*)\,
| \ i\in{\cal I}\},\{\nabla h_j(z_*)\, | \ j\in{\cal J}\})$ is positively
linearly dependent, there exists $\delta>0$ such that, for every $z\in {
B}(z_*,\delta)$, $\{\nabla g_i(z), \nabla h_j(z)\, | \ i\in {\cal I}, j\in
{\cal J}\}$ is linearly dependent.

\item[{\rm(iii)}]  Let ${\cal J}\subseteq\{1,\cdots,s\}$
be such that $\{\nabla h_j(z_*)\}_{j\in{\cal J}}$ is a basis for ${\rm
span}\,\{\nabla h_j(z_*)\}_{j=1}^s$. We say  that the {\em relaxed constant
positive linear dependence} condition (RCPLD) holds at $z_*$ iff there exists
$\delta>0$ such that
\begin{itemize}
\item $\{\nabla h_j(z)\}_{j=1}^s$ has the same rank for each $z\in {
B}(z_*,\delta)$;
\item for each ${\cal I}\subseteq {I}_g^*$, if $(\{\nabla g_i(z_*)\, | \ i\in{\cal
I}\},\{\nabla h_j(z_*)\, | \ j\in{\cal J}\})$ is positively linearly dependent, then $\{\nabla
g_i(z), \nabla h_j(z)\, | \ i\in {\cal I}, j\in {\cal J}\}$ is linearly dependent
for each $z\in B(z_*,\delta)$.
\end{itemize}
\item[{\rm(iv)}] Let ${\cal J}_-:=\{i\in I_g^*\,|\ -\nabla g_i(z_*)\in {\cal L}^o(z_*)
\}$, where
$$ {\cal L}^o(z_*):=\{d: d=\sum_{i\in I_g^*} \lambda_i \nabla g_i(z_*)+\sum_{j=1}^s \mu_j \nabla h_j(z_*), \lambda_j \geq 0 \}$$ We say that the {\em
constant rank of the subspace component} condition (CRSC) holds at $z_*$ iff
there exists $\delta>0$ such that the family of gradients $$\{\nabla
g_i(z),\nabla h_j(z) \,|\ i\in {\cal J}_-,j\in\{1,\cdots,s\}\}$$ has the same
rank for every $z\in B (z_*,\delta)$.
\end{itemize}
\end{defn}

The PLICQ is the same as NNAMCQ which is equivalent to the classical MFCQ in this case. The CPLD
was introduced by Qi and Wei in \cite{Qi-Wei} and was shown to be a constraint
qualification by Andreani et al. in \cite{Andreani-cpldcq}. The relaxed CPLD was
introduced by Andreani et al.  in \cite{abdreabi-haeser--}. The CRSC was
introduced by Andreani et al. in \cite{abdeabi--twoCQ}, and is shown to be weaker than the
RCPLD (see \cite[Theorem 4.2]{abdeabi--twoCQ}).
The geometric introduction to these constraint qualifications is given in a recent paper \cite{abdeabi--silva}. We summarize the relationships of these constraint qualifications below.
$$\mbox{PLICQ} \Longrightarrow \mbox{CPLD} \Longrightarrow \mbox{RCPLD}  \Longrightarrow \mbox{CRSC}\Longrightarrow \mbox{ Local Error bound}.$$

Note that WBCQ plus calmness of $M$ is a weaker condition than  NNAMCQ let alone the stronger condition calibrated constraint qualification. To see this point we  consider  the simple case where $\Omega =\{0\}$,  $U=\Re^m$ and $\Phi$ is affine,
NNAMCQ means that the Jacobian matrix $\nabla \Phi(\bar x,\bar u)$ has maximum row rank. The calmness of $M$ satisfies automatically. The WBCQ becomes
$$(\alpha, 0)=\nabla \Phi(\bar x,\bar u)^T\lambda \Longrightarrow  \alpha =0.$$
This means that the row vectors of $\nabla \Phi(\bar x,\bar u)$ may be linearly dependent and hence the Jacobian matrix $\nabla \Phi(\bar x,\bar u)$ does not required to have maximum row rank. More precisely we have the following results.
\begin{proposition} Suppose that $\Gamma(x):=\{\phi(x,u): u\in \Re^m,\Phi(x,u)=0\}$ where $\Phi:\Re^n\times \Re^m\rightarrow \Re^d$ is affine and $\phi:\Re^n\times \Re^m\rightarrow \Re^q$ is locally Lipschitz.  Suppose  that
\begin{eqnarray*}
\left\{ \begin{array}{l} \alpha=\nabla_x \Phi(\bar{x},\bar{u})^T \lambda\\
0=\nabla_u \Phi(\bar{x},\bar{u})^T \lambda , \quad \lambda \in \Re^m
\end{array} \right. \qquad \Longrightarrow \alpha=0.
\end{eqnarray*}
Then WBCQ plus calmness of $M$ holds and hence $\Gamma$ is pseudo-Lipschitz around $(\bar{x},\phi(\bar{x}, \bar{u}))\in gph\Gamma$.
\end{proposition}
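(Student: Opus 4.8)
The plan is to recognize the statement as the specialization of Proposition \ref{prop2.2} to the data $U=\Re^m$, $\Omega=\{0\}$, with $\Phi$ affine, so that the whole task splits into two routine verifications: (i) that the displayed hypothesis is \emph{verbatim} the WBCQ (\ref{cq1section2}) once the relevant normal cones are computed, and (ii) that calmness of $M$ is automatic from polyhedrality. With both in hand, Proposition \ref{prop2.2} delivers the pseudo-Lipschitz conclusion.

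First I would compute the two normal cones appearing in (\ref{cq1section2}). Since $U=\Re^m$, we have $N_U^L(\bar u)=\{0\}$; and since $\Omega=\{0\}$, we have $N_\Omega^L(\Phi(\bar x,\bar u))=N_{\{0\}}^L(0)=\Re^d$, so the multiplier $\lambda$ ranges freely over $\Re^d$. Because $\Phi$ is affine it is in particular smooth, hence $\langle\lambda,\Phi(\cdot,\cdot)\rangle$ is affine and $\partial^L\langle\lambda,\Phi\rangle(\bar x,\bar u)=\{\nabla\Phi(\bar x,\bar u)^T\lambda\}=\{(\nabla_x\Phi(\bar x,\bar u)^T\lambda,\ \nabla_u\Phi(\bar x,\bar u)^T\lambda)\}$. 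Substituting these into (\ref{cq1section2}), the inclusion $(\alpha,0)\in\partial^L\langle\lambda,\Phi\rangle(\bar x,\bar u)+\{0\}\times N_U^L(\bar u)$ together with $\lambda\in N_\Omega^L(\Phi(\bar x,\bar u))$ becomes precisely $\alpha=\nabla_x\Phi(\bar x,\bar u)^T\lambda$ and $0=\nabla_u\Phi(\bar x,\bar u)^T\lambda$ with $\lambda\in\Re^d$. Thus the assumed implication is exactly the WBCQ at $(\bar x,\bar u)$, and (i) is done.

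Next I would settle the calmness of $M$. With $\Omega=\{0\}$ and $U=\Re^m$, the perturbed map (\ref{perturbedmap}) reads $M(y)=\{(x,u):\Phi(x,u)+y=0\}$, and since $\Phi$ is affine its graph $\{(y,x,u):\Phi(x,u)+y=0\}$ is a polyhedral convex set. Hence $M$ is a polyhedral multifunction and, by Robinson's result quoted in the affine case of Proposition \ref{pseudoL}, it is locally upper-Lipschitz at every point, in particular calm at $(0,\bar x,\bar u)\in gph M$. With WBCQ established and $M$ calm, Proposition \ref{prop2.2} applies verbatim and yields that $\Gamma$ is pseudo-Lipschitz around $(\bar x,\phi(\bar x,\bar u))\in gph\Gamma$.

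I do not anticipate any genuine obstacle; the result is a clean corollary of Proposition \ref{prop2.2}. The only steps needing a word of justification are the two normal-cone identities---chiefly $N_{\{0\}}^L(0)=\Re^d$, which leaves $\lambda$ unconstrained---and the appeal to the polyhedral upper-Lipschitz property for the calmness of $M$; both are standard. The point of the statement is interpretive rather than technical: it exhibits concretely how WBCQ together with calmness of $M$ can hold even when $\nabla\Phi(\bar x,\bar u)$ fails to have full row rank, so that NNAMCQ---and a fortiori CCQ---is violated.
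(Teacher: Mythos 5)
Your proposal is correct and follows exactly the route the paper intends: the paper states this proposition without a separate proof, treating it as immediate from the preceding discussion, namely that calmness of $M$ is automatic for affine $\Phi$ by Robinson's polyhedral-multifunction result quoted in Proposition \ref{pseudoL}, and that with $U=\Re^m$, $\Omega=\{0\}$ the WBCQ (\ref{cq1section2}) reduces verbatim to the displayed implication, so Proposition \ref{prop2.2} applies. Your two normal-cone computations (including the observation that $\lambda$ should range over $\Re^d$, correcting the paper's typo ``$\lambda\in\Re^m$'') are precisely the verifications the paper leaves implicit.
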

\begin{example}
For example, $\Phi_1=x+u_1-u_2$, $\Phi_2=2x+2u_1-2u_2$. The gradients $\nabla\Phi_1=(1,1,-1)$ and $\nabla\Phi_2=(2,2,-2)$  are linearly dependent and hence NNAMCQ fails. We now show that WBCQ plus calmness condition holds.  Since the mapping $\Phi$ is affine, the calmness condition holds. The only $\lambda_1,\lambda_2$ making
$0=\lambda_1 \nabla_u\Phi_1+\lambda_2\nabla_u \Phi_2$
are those satisfying $\lambda_1=-2 \lambda_2$.  With these numbers, we must have
$0=\lambda_1 \nabla_x\Phi_1+\lambda_2\nabla_x \Phi_2.$ Hence the WBCQ holds. Therefore the set-valued map defined by
$$\Gamma(x):=\{\phi(x,u): (u_1,u_2)\in \Re^2,x+u_1-u_2=0, 2x+2u_1-2u_2=0\}$$
is  pseudo-Lipschitz around $(\bar{x},\phi(\bar{x}, \bar{u}))\in gph\Gamma$ for any $(\bar{x}, \bar{u}) \in \Re\times \Re^2$.
\end{example}
\section{Optimal  differential inclusion problem}

In this section we consider the optimal differential inclusion problem:
\begin{eqnarray*}
(P_I)~~~~~\min &&  f(x(t_0),x(t_1))\\
s.t. && \dot{x}(t)\in \Gamma_{t}(x(t)) \,\quad a.e. \,t\in [t_0,t_1]\\
&& (x(t_0),x(t_1))\in E.
\end{eqnarray*}
We assume that  $f:\Re^n\times \Re^n\rightarrow \Re$ is a locally Lipschitz function and $E$ is a closed subset of $\Re^n\times \Re^n$.
$\Gamma_{t}$ is a set-valued map from $\Re^n$ to the subsets of $\Re^n$ for each $t$,
the map $(t,x)\rightarrow \Gamma_{t}(x)$ is ${\cal L}\times {\cal B}$-measurable, where ${\cal L}\times {\cal B}$ denotes the $\sigma$-algebra of subsets of $[t_0,t_1]\times \Re^n$ generated by product
sets $M\times N$ where $M$ is a Lebesgue measurable subset of $[t_0,t_1]$ and $N$ is a Borel subset of $\Re^n$ and the set $G(t):=gph\Gamma_{t}(\cdot)$
is closed for each $t$.
Let $R(\cdot)$ be a measurable function on $[t_0,t_1]$ with values in $(0,+\infty]$. For brevity, we refer to any absolutely continuous function $x: [t_0,t_1]\rightarrow\Re^n$ as an arc. An arc $x_*$ is feasible if it satisfies all constraints.
We say that a feasible arc $x_*$ is a $W^{1,1}$ local minimum of radius $R(\cdot)$ for problem $(P_I)$ if for some $\varepsilon>0$, for every other feasible arc $x$ satisfying
$$|\dot{x}(t)-\dot{x}_{*}(t)|\leq R(t)\,\, a.e. \,t\in [t_0,t_1],  \int_{t_0}^{t_1}|\dot{x}(t)-\dot{x}_{*}(t)|dt\leq \varepsilon,
\| x-x_{*}\|_{\infty}\leq \varepsilon,$$
where $\|\cdot\|_{\infty}$ denotes the  essential  norm in $L^{\infty}$ space,
one has $$ f(x(t_0),x(t_1))\geq f(x_*(t_0),x_*(t_1)).$$
\begin{defn}
\label{def1}
$\Gamma_t$  is said to satisfy a pseudo-Lipschitz condition of radius $R(\cdot)$ near $x_*$ if there exists $\varepsilon>0$ and an uniformly positive integrable function $k:[t_0,t_1]\rightarrow \Re$ (namely,  $k(t)\geq k_{0}>0, \mbox{ a.e.} t\in [t_0,t_1]$ for some constant $k_0$) such that,
for almost all $t\in [t_0,t_1]$,
$$x,x'\in B(x_{*}(t),\varepsilon) \Longrightarrow \Gamma_{t}(x)\cap{ B}(\dot{x}_*(t),R(t))\subset  \Gamma_{t}(x')+k(t)|x-x'|\bar{B}.$$
\end{defn}
 Since the modulus $k(t)$ is required to be integrable and  $R(t)$ is prescribed, the pseudo-Lipschitz condition of radius $R(\cdot)$ implies that for almost all $t\in [t_0,t_1]$, the set-valued map $\Gamma_t(\cdot)$ is  pseudo-Lipschitz around
$(x_*(t),\dot{x}_*(t))\in gph \Gamma_t$ but not vice versa.
\begin{defn}\label{def2}
$\Gamma_t$  is said to satisfy the  tempered growth condition for radius $R(\cdot)$ near $x_*$ if there exist $\varepsilon>0$, $\lambda\in (0,1)$ and an uniformly positive integrable function $r:[t_0,t_1]\rightarrow \Re$ such that for almost every $t\in [t_0,t_1]$,  we have $0<r_0\leq r_0(t)\leq \lambda R(t), \mbox{ a.e. } t$  for some $r_0$ and
$$| x-x_{*}(t)|\leq \varepsilon \Longrightarrow \Gamma_{t}(x)\cap {B}(\dot{x}_*(t),r_0(t)) \not =\emptyset.$$
\end{defn}
Note that our definitions of the pseudo-Lipschitz continuity and the tempered growth condition of radius $R(\cdot)$ are slightly different with the original definitions in \cite{cr} in that the closed ball  in the  definitions of the pseudo-Lipschitz continuity is replaced by the open ball, and the functions $k(t), r(t)$ are required to be bounded away from zero almost everywhere. These kinds of modifications were first introduced in \cite{b}.  The justification of requiring the functions $k(t), r(t)$ to be uniformly
positive integrable is that the set-valued map in the  transformed differential inclusion in \cite{cr}   needs to be bounded-valued.

The following theorem follows from  applying Clarke \cite[Theorem 3.1.1]{cr}. The conclusion is slightly different  with the original  theorem \cite[Theorem 3.1.1]{cr} in that  the Weierstrass condition holds only on the the open ball $B(\dot{x}_*(t),R(t))$. Note that it was shown in \cite{b} through a counter example that  the Weierstrass condition may not hold  on the the closed  ball $\bar{B}(\dot{x}_*(t),R(t))$.

\begin{thm} \label{thm2.1}
Assume that $x_*$  is a local minimum of radius $R(\cdot)$ of the optimal differential inclusion problem $(P_I)$.
Suppose that, for the radius $R(\cdot)$,  $\Gamma_t$ satisfies pseudo-Lipschitz and tempered growth conditions near $x_*$.
Then there exist an arc $p$ and a number $\lambda_0 $ in  $\{0,1\}$ satisfying the nontriviality condition
\begin{equation}
(\lambda_0, p(t))\not =0 \quad \forall t\in [t_0,t_1]\label{nontrivi}
\end{equation}  and the transversality condition:
\begin{equation} (p(t_0),-p(t_1)) \in \lambda_0 \partial^Lf(x_*(t_0),x_*(t_1))+ N^{L}_{E}(x_*(t_0), x_*(t_1)),
\label{transv}
\end{equation}
and such that $p$ satisfies the Euler inclusion
\begin{equation}
 \dot{p}(t)\in co\left\{\omega: (\omega,p(t))\in N_{G(t)}^{L}(x_*(t),\dot{x}_{*}(t))\right\},\, a.e.\, t\in [t_0,t_1],
\label{Euler}
\end{equation}
as well as  the Weierstrass condition of radius $R(\cdot)$:
\begin{equation}
\langle p(t),v-\dot{x}_{*}(t) \rangle\leq 0, \,\,\,\,\, \forall v\in \Gamma_{t}(x_*(t))\cap B(\dot{x}_*(t),R(t)),\,a.e.\, t\in [t_0,t_1].
\label{Weires}
\end{equation}
\end{thm}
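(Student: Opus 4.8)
The plan is to deduce the four conditions from the fundamental multiplier rule for differential inclusions, \cite[Theorem 3.1.1]{cr}, in the corrected open-ball form established in \cite{b}, which asserts exactly the nontriviality (\ref{nontrivi}), transversality (\ref{transv}), Euler inclusion (\ref{Euler}) and a Weierstrass condition for a $W^{1,1}$ local minimum of radius $R(\cdot)$. The key observation driving the reduction is that, because every competing arc in the definition of such a minimum satisfies $|\dot{x}(t)-\dot{x}_*(t)|\le R(t)$ a.e., the optimality of $x_*$ is unaffected if the dynamics $\Gamma_t$ are replaced by the truncated map $\tilde{\Gamma}_t(x):=\Gamma_t(x)\cap\bar{B}(\dot{x}_*(t),R(t))$, whose values lie in a fixed ball. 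The argument therefore rests on passing to this bounded, measurably parametrized inclusion, applying the multiplier rule there, and transferring the conclusions back to the original data $(f,E,G(t))$.

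First I would verify that Definitions \ref{def1} and \ref{def2} supply precisely the hypotheses Clarke's proof consumes. The tempered growth condition (Definition \ref{def2}) guarantees $\tilde{\Gamma}_t(x)\neq\emptyset$ for $x$ near $x_*(t)$, since it forces a selection in $B(\dot{x}_*(t),r_0(t))$ with $r_0(t)\le\lambda R(t)<R(t)$; the pseudo-Lipschitz condition of radius $R(\cdot)$ (Definition \ref{def1}) yields the Aubin property of $\tilde{\Gamma}_t$ around $(x_*(t),\dot{x}_*(t))$ with integrable modulus $k(\cdot)$. The requirement that $k$ and $r$ be \emph{uniformly} positive is exactly the normalization introduced in \cite{b}: it is what makes the transformed problem's velocity set bounded and the estimates uniform in $t$, as the measurable-selection and compactness steps of Clarke's proof require.

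Next, invoking the multiplier rule produces an adjoint arc $p$ and a cost multiplier $\lambda_0\in\{0,1\}$. Since the truncation does not alter the graph of $\Gamma_t$ inside $B(\dot{x}_*(t),R(t))$, the limiting normal cone $N^{L}_{G(t)}(x_*(t),\dot{x}_*(t))$ appearing in (\ref{Euler}) is unchanged, and the nontriviality (\ref{nontrivi}), the transversality (\ref{transv}) through $\partial^L f$ and $N^L_E$ at the endpoints, and the Euler inclusion (\ref{Euler}) all transfer verbatim.

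The main obstacle is the Weierstrass condition (\ref{Weires}), and this is where the open-ball formulation is essential: the naive inequality over the closed ball $\bar{B}(\dot{x}_*(t),R(t))$ is false, as the counterexample of \cite{b} exhibits failure on the bounding sphere. I would therefore recover (\ref{Weires}) only over the open ball by exhaustion. For each $\rho\in(\lambda,1)$ the pseudo-Lipschitz and tempered growth data persist for the radius $\rho R(\cdot)$ (the intersections only shrink, and one may enlarge $\lambda$ to some $\lambda'\in(\lambda/\rho,1)$ to keep the growth bound), so the closed-ball conclusion applies there and gives $\langle p(t),v-\dot{x}_*(t)\rangle\le 0$ for all $v$ with $|v-\dot{x}_*(t)|\le\rho R(t)$; letting $\rho\uparrow 1$ then covers the entire open ball $B(\dot{x}_*(t),R(t))$ while never reaching the sphere, consistent with the counterexample. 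The delicate part is that the adjoint arcs $p_\rho$ obtained for different $\rho$ must be shown to admit a common limit $p$: this is handled by the nontriviality normalization, which prevents $(\lambda_0,p_\rho)$ from degenerating, together with a compactness argument on the arcs $p_\rho$ satisfying a uniform Euler-type bound, and it is exactly the preservation of uniform positivity of $k,r$ as $\rho\uparrow 1$ that keeps the boundedness underpinning each truncated problem intact in the limit.
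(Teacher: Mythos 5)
The paper offers no argument beyond the citation you open with: Definitions \ref{def1} and \ref{def2} are precisely the open-ball, uniformly-positive-modulus hypotheses under which \cite[Theorem 3.1.1]{cr}, as corrected in \cite{b}, yields the four stated conclusions, and the paper simply applies that result at radius $R(\cdot)$. Had you stopped after your first two paragraphs, your proposal would coincide with the paper's proof. Instead you treat the Weierstrass condition (\ref{Weires}) as an obstacle requiring extra machinery, and both devices you introduce for it contain genuine errors.

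First, the truncation step is unsound. The map $\tilde{\Gamma}_t(x):=\Gamma_t(x)\cap\bar{B}(\dot{x}_*(t),R(t))$ need not inherit the pseudo-Lipschitz property of Definition \ref{def1}: a velocity $v\in\Gamma_t(x)\cap B(\dot{x}_*(t),R(t))$ lying near the bounding sphere is matched by some $v'\in\Gamma_t(x')$ with $|v-v'|\le k(t)|x-x'|$, but nothing forces $v'$ to lie in $\bar{B}(\dot{x}_*(t),R(t))$, so $v'$ may fail to belong to $\tilde{\Gamma}_t(x')$. (Take $n=1$, $x_*\equiv 0$, $R\equiv 1$, $\Gamma(x)=\{0\}\cup\{1-\delta+x\}$ with small $\delta>0$: $\Gamma$ satisfies Definition \ref{def1} with $k\equiv 1$, yet $\tilde{\Gamma}$ does not, since $\tilde{\Gamma}(\delta-h')\ni 1-h'$ while $\tilde{\Gamma}(\delta+h)=\{0\}$ for small $h,h'>0$.) Worse, if one could apply the multiplier rule to $\tilde{\Gamma}_t$ viewed as a bounded inclusion, the resulting Weierstrass inequality would hold on all of $\tilde{\Gamma}_t(x_*(t))=\Gamma_t(x_*(t))\cap\bar{B}(\dot{x}_*(t),R(t))$, i.e.\ on the closed ball --- contradicting the very counterexample of \cite{b} that you invoke; so this step is not merely incomplete, it cannot be repaired. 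Second, the exhaustion step is circular. The assertion that ``the closed-ball conclusion applies'' at radius $\rho R(\cdot)$ is exactly the uncorrected form of \cite[Theorem 3.1.1]{cr}, which \cite{b} refuted; it is not a theorem at any radius, even when the hypotheses hold for a strictly larger one. The only legitimate source of a Weierstrass inequality on $\bar{B}(\dot{x}_*(t),\rho R(t))$ is the corrected open-ball theorem applied at an intermediate radius --- that is, the statement being proved --- and once that theorem is granted, it should be applied a single time at radius $R(\cdot)$, which renders the exhaustion, and the delicate compactness argument it forces on the family $\{p_\rho\}$, entirely superfluous.
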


Clarke \cite[Corollary 3.5.3]{cr} showed that the following bounded slope condition together with the strengthened tempered growth condition imply the pseudo-Lipschitz and tempered growth conditions of radius $R(\cdot)$ and hence the necessary conditions of Theorem \ref{thm2.1} hold as stated.
\begin{corollary}\label{thm2}
Suppose that in Theorem \ref{thm2.1}, the pseudo-Lipschitz hypothesis is replaced by the following bounded slope condition: there exist  $\varepsilon>0$ and a
 uniformly positive integrable function $k$ such that, for almost every $t$, for every $(x,v) \in G(t)$ with $x\in B(x_*(t),\varepsilon)$ and $v \in {B}(\dot{x}_*(t),R(t))$, for all $(\alpha,\beta) \in N_{G(t)}^P(x,v)$, one has $|\alpha| \leq k(t) |\beta|$. Suppose also that the strengthened  tempered growth condition holds:
$$\hbox{\rm ess inf} \left\{\frac{R(t)}{k(t)} : t\in [t_0,t_1]\right\}>0,$$
where $ess \ inf$ is the essential infimum.
Then the necessary conditions of Theorem \ref{thm2.1} hold as stated, for the same radius $R(\cdot)$.
\end{corollary}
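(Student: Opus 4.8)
The plan is to reduce the Corollary to Theorem~\ref{thm2.1} by showing that the bounded slope condition together with the strengthened tempered growth condition imply the two standing hypotheses of Theorem~\ref{thm2.1}, namely the pseudo-Lipschitz condition of radius $R(\cdot)$ (Definition~\ref{def1}) and the tempered growth condition of radius $R(\cdot)$ (Definition~\ref{def2}), both near $x_*$ and for the same radius. Once these are in force, Theorem~\ref{thm2.1} applies verbatim and produces the arc $p$, the scalar $\lambda_0\in\{0,1\}$, and the nontriviality, transversality, Euler, and Weierstrass conclusions, so nothing remains after the reduction is carried out.

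First I would derive the pseudo-Lipschitz condition of radius $R(\cdot)$. Fix a $t$ for which the bounded slope condition holds. That hypothesis says precisely that every proximal normal $(\alpha,\beta)\in N^P_{G(t)}(x,v)$ at points $(x,v)\in G(t)$ with $x\in B(x_*(t),\varepsilon)$ and $v\in B(\dot x_*(t),R(t))$ satisfies $|\alpha|\le k(t)|\beta|$. This is exactly the proximal-normal estimate of Proposition~\ref{prop2.0}(c), localized in $t$ and restricted to the velocity ball $B(\dot x_*(t),R(t))$; by the quantitative form of that criterion (with the modulus $\mu$ taken to be $k(t)$) the map $\Gamma_t(\cdot)$ enjoys the Aubin property with modulus $k(t)$, and the resulting Lipschitz-like inclusion can be localized to velocities in $B(\dot x_*(t),R(t))$, which is the content of Definition~\ref{def1}. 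Since $k$ is uniformly positive and integrable the modulus requirement is met, and measurability in $t$ is inherited from the ${\cal L}\times{\cal B}$-measurability of $(t,x)\mapsto\Gamma_t(x)$.

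Next I would establish the tempered growth condition of radius $R(\cdot)$. Because $x_*$ is feasible, $\dot x_*(t)\in\Gamma_t(x_*(t))$ for almost every $t$, and trivially $\dot x_*(t)\in B(\dot x_*(t),R(t))$. Applying the inclusion of Definition~\ref{def1} with the first point equal to $x_*(t)$ and the second equal to $x$ yields, for each $x$ with $|x-x_*(t)|\le\varepsilon'$, a point $w\in\Gamma_t(x)$ with $|w-\dot x_*(t)|\le k(t)|x-x_*(t)|$. Writing $c:=\mbox{ess\,inf}\{R(t)/k(t)\}>0$ for the strengthened tempered growth constant and choosing $\varepsilon'\le\lambda c$ gives $|w-\dot x_*(t)|\le\lambda k(t)c\le\lambda R(t)$, since $c\le R(t)/k(t)$ forces $k(t)c\le R(t)$; hence $\Gamma_t(x)\cap B(\dot x_*(t),r_0(t))\ne\emptyset$ with $r_0(t):=\lambda R(t)$. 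Moreover $k(t)\ge k_0>0$ together with $R(t)/k(t)\ge c$ forces $R(t)\ge ck_0>0$ a.e., so $r_0(t)$ is uniformly positive and satisfies $0<r_0\le r_0(t)\le\lambda R(t)$, which is exactly Definition~\ref{def2} (shrinking $\lambda$ slightly, or open-ball bookkeeping, absorbs the strict-inequality issue).

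I expect the genuine difficulty to lie in the first step, namely upgrading the pointwise proximal-normal bound to the radius-$R(\cdot)$ Aubin inclusion of Definition~\ref{def1}. Proposition~\ref{prop2.0} is stated for the Aubin property around a point, on some unspecified small velocity neighborhood, whereas here the velocity ball carries the prescribed and possibly large radius $R(t)$ and the modulus $k(t)$ must be controlled explicitly and measurably in $t$. Making this uniform-in-$t$, quantitative passage rigorous---essentially tracking the escape rate of $G(t)$ along a path joining two velocities through the proximal normal inequality---is the technical heart of the argument; this is precisely what Clarke's \cite[Corollary 3.5.3]{cr} provides, so a fully self-contained proof would reprove that quantitative statement rather than invoke Proposition~\ref{prop2.0} as a black box.
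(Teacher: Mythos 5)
Your proposal is correct and takes essentially the same route as the paper: the paper's entire proof of Corollary \ref{thm2} is the observation, cited from Clarke \cite[Corollary 3.5.3]{cr}, that the bounded slope condition together with the strengthened tempered growth condition imply the pseudo-Lipschitz and tempered growth hypotheses of Theorem \ref{thm2.1}, which is then applied verbatim---exactly your reduction, with the hard quantitative step (bounded slope $\Rightarrow$ radius-$R(\cdot)$ pseudo-Lipschitz inclusion) deferred to Clarke just as you flag. One small repair to your tempered-growth step: since $R(\cdot)$ is only measurable with values in $(0,+\infty]$ and need not be integrable, the choice $r_0(t):=\lambda R(t)$ may fail the integrability required in Definition \ref{def2}; take instead $r_0(t):=\lambda c\,k(t)$ (uniformly positive and integrable because $k$ is), which satisfies $r_0(t)\le \lambda R(t)$ by your own estimate $c\,k(t)\le R(t)$.
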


Observe that in both Theorem \ref{thm2.1} and Corollary \ref{thm2}, one requires to verify that the Lipschitz modulus $k(t)$ is  integrable and the tempered growth condition holds. These properties may not be easy to verify. In the rest of this section we will find conditions under which the modulus $k$ is a positive constant and hence integrable automatically.
 For this purpose we first derive the following result.
 Let
$$C_*^{\varepsilon, R}:=cl \{(t,x,v)\in [t_0,t_1]\times \Re^n\times \Re^n: (x,v) \in G(t)\cap  ( \bar B(x_*(t),\varepsilon)\times \bar B(\dot{x}_*(t), R(t)))\},$$
where $cl$ denotes the closure.
\begin{proposition}\label{propositionnew}Assume that $\Gamma_t(x)\equiv\Gamma(x)$ is independent of $t$ and denote by $G\equiv G(t)$.
Suppose that $C_*^{\varepsilon, R}$ is compact and that  for all  $(t,x,v)\in C_*^{\varepsilon, R}$, $D^*\Gamma(x,v)(0)=\{0\}$, then there exists a constant $k>0$ such that for almost every $t$, for every $(x,v) \in G\cap  (  B(x_*(t),\varepsilon) \times B(\dot{x}_*(t), R(t)))$, the bounded slope condition holds:
$$(\alpha,\beta) \in N_{G}^P(x,v)\Longrightarrow|\alpha| \leq k |\beta|.$$

\end{proposition}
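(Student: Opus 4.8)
The plan is to argue by contradiction, extracting a sequence of ``bad'' points along which the bounded slope bound fails, and then to use the compactness of $C_*^{\varepsilon,R}$ together with the robustness (closedness) of the limiting normal cone to force a violation of the pointwise coderivative condition $D^*\Gamma(x,v)(0)=\{0\}$. Concretely, suppose no uniform constant $k$ works. Then for each integer $j$ there exist a time $t_j$, a point $(x_j,v_j)\in G\cap(B(x_*(t_j),\varepsilon)\times B(\dot x_*(t_j),R(t_j)))$, and a proximal normal $(\alpha_j,\beta_j)\in N_G^P(x_j,v_j)$ with $|\alpha_j|>j\,|\beta_j|$. Since the normal cone is a cone, I may normalize so that $|\alpha_j|=1$ for every $j$; then necessarily $|\beta_j|\to 0$. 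The triple $(t_j,x_j,v_j)$ lies in the set $C_*^{\varepsilon,R}$ (or its closure), which is assumed compact, so after passing to a subsequence $(t_j,x_j,v_j)\to(t,x,v)\in C_*^{\varepsilon,R}$.

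Next I would pass to the limit in the normal-cone membership. Because $(\alpha_j,\beta_j)\in N_G^P(x_j,v_j)$ with $(x_j,v_j)\to(x,v)$ in $G$, and since the normalized sequence $(\alpha_j,\beta_j)$ is bounded ($|\alpha_j|=1$, $\beta_j\to 0$), I extract a further subsequence with $\alpha_j\to\alpha$, $|\alpha|=1$, and $\beta_j\to 0$. By the very definition of the limiting normal cone $N_G^L$ as the set of limits of proximal normals at nearby points, the limit $(\alpha,0)$ belongs to $N_G^L(x,v)$. Translating this via the definition of the coderivative, $(\alpha,0)\in N_G^L(x,v)$ means precisely $\alpha\in D^*\Gamma(x,v)(0)$. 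But $\alpha\neq 0$ since $|\alpha|=1$, contradicting the hypothesis $D^*\Gamma(x,v)(0)=\{0\}$ at the limit point $(t,x,v)\in C_*^{\varepsilon,R}$. This contradiction yields the desired uniform $k$.

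Two technical points deserve care. First, one must be sure the limit point $(x,v)$ genuinely lies in $G$ (so that $N_G^L(x,v)$ is defined there): this is where the closedness of $G$ and the use of the closure in the definition of $C_*^{\varepsilon,R}$ enter, guaranteeing $(x,v)\in G$. Second, the boundary radius: the bad points are taken in the \emph{open} balls $B(x_*(t_j),\varepsilon)\times B(\dot x_*(t_j),R(t_j))$, but the limit may land on the boundary of the closed balls; since $C_*^{\varepsilon,R}$ is built from the \emph{closed} balls, the limit still sits in $C_*^{\varepsilon,R}$ and the hypothesis applies there, so this is not an obstacle.

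The main obstacle I anticipate is the bookkeeping in the limiting argument that simultaneously involves the time variable $t_j$ and the moving reference points $x_*(t_j)$, $\dot x_*(t_j)$. Since $\Gamma$ is independent of $t$ here, the graph $G$ itself does not move, which is exactly what makes the compactness of $C_*^{\varepsilon,R}$ transferable to a statement about the fixed set $G$; the role of $t$ is only to constrain which points $(x,v)$ are admissible through the reference arc. The delicate part is verifying that the limiting normal $(\alpha,0)$ is obtained by a \emph{legitimate} limiting process, i.e., that the proximal normals at $(x_j,v_j)$ converge to a limiting normal at the common limit $(x,v)$ rather than at distinct cluster points; this follows from the definition of $N^L$ once convergence of base points in $G$ and of the normals is secured, but it is the step where the normalization $|\alpha_j|=1$ and the forced decay $|\beta_j|\to 0$ must be invoked together to produce the required contradiction with $D^*\Gamma(x,v)(0)=\{0\}$.
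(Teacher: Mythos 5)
Your proposal is correct and follows essentially the same argument as the paper's own proof: contradiction, normalization of the proximal normals to $|\alpha_j|=1$ (forcing $\beta_j\to 0$), compactness of $C_*^{\varepsilon,R}$ to pass to a limit point, and closedness of the limiting normal cone to obtain $(\alpha,0)\in N_G^L(x,v)$ with $|\alpha|=1$, contradicting $D^*\Gamma(x,v)(0)=\{0\}$. Your additional remarks on the open-versus-closed ball issue and on the limit point lying in $G$ are sound clarifications of steps the paper leaves implicit.
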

\begin{proof}
We prove it by contradiction. Suppose that for each $i$ there exist $t_i \in [t_0,t_1]$, $(x_i,v_i) \in G\cap ( B(x_*(t_i),\varepsilon)\times B(\dot{x}_*(t_i), R(t_i))) $ such that
$$(\alpha_i,\beta_i) \in N_{G}^P(x_i,v_i)\Rightarrow|\alpha_i| > i |\beta_i|$$
By normalizing and taking subsequences, we may take $|\alpha_i|=1$, and we may suppose that $(t_i,x_i,v_i)\rightarrow (t,x,v) \in C_*^{\varepsilon, R}$ and $\alpha_i\rightarrow \alpha$, where $\alpha$ is a unit vector. It follows that $\beta_i\rightarrow 0$. Then in the limit we derive
$$(\alpha, 0)\in N^L_G(x,v).$$
Equivalently $\alpha \in D^*\Gamma (x,v)(0)$ with $\alpha\not =0$.
But this contradicts the fact that $$D^*\Gamma (x,v)(0)=\{0\}. $$ The contradiction shows that the bounded slope condition holds for certain constant $k>0$.
\end{proof}
%
%

By Corollary \ref{thm2} and Proposition \ref{propositionnew},
we obtain the following necessary optimality condition.
\begin{thm}\label{thm2new}
Assume that $x_*$  is a local minimum of radius $R(\cdot)$ of the optimal differential inclusion problem $(P_I)$ where  $\Gamma_t(x)\equiv\Gamma(x)$ is independent of $t$. Furthermore suppose that there exists $\delta>0$ such that $R(t) \geq \delta$. If $C_*^{\varepsilon, R}$ is compact and  for all $(t, x,v) \in C_*^{\varepsilon, R}$, one has $D^*\Gamma (x,v)(0)=\{0\}$.
Then the necessary conditions of Theorem \ref{thm2.1} hold as stated, for the same radius $R(\cdot)$.
\end{thm}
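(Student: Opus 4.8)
The plan is to derive the conclusion as a direct combination of Proposition \ref{propositionnew} and Corollary \ref{thm2}; the entire argument reduces to checking that the hypotheses stated here feed exactly into those two results, with the decisive point being that the coderivative-plus-compactness assumption upgrades the merely \emph{integrable} Lipschitz modulus of the general theory to a \emph{constant} one.

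First I would invoke Proposition \ref{propositionnew}. Its hypotheses---that $C_*^{\varepsilon,R}$ is compact and that $D^*\Gamma(x,v)(0)=\{0\}$ for every $(t,x,v)\in C_*^{\varepsilon,R}$---are precisely the assumptions made in the present theorem (recall $\Gamma_t\equiv\Gamma$ and $G\equiv G(t)$). Hence there is a single \emph{constant} $k>0$ such that, for almost every $t$ and for every $(x,v)\in G\cap\big(B(x_*(t),\varepsilon)\times B(\dot{x}_*(t),R(t))\big)$, the bounded slope condition $(\alpha,\beta)\in N_G^P(x,v)\Rightarrow|\alpha|\leq k|\beta|$ holds. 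This is exactly the bounded slope condition required in Corollary \ref{thm2}, realized by the constant modulus function $k(t)\equiv k$.

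Next I would verify that this constant modulus meets the two remaining requirements of Corollary \ref{thm2}. Since $k>0$ is a constant, $k(\cdot)$ is trivially uniformly positive (take $k_0=k$) and integrable on the bounded interval $[t_0,t_1]$, so it qualifies as a uniformly positive integrable modulus. For the strengthened tempered growth condition I would use the lower bound $R(t)\geq\delta$ together with $k(t)\equiv k$ to obtain
\begin{equation*}
\frac{R(t)}{k(t)}=\frac{R(t)}{k}\geq\frac{\delta}{k}>0\quad\hbox{a.e. } t\in[t_0,t_1],
\end{equation*}
whence $\hbox{\rm ess\,inf}\,\{R(t)/k(t):t\in[t_0,t_1]\}\geq\delta/k>0$.

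With both the bounded slope condition and the strengthened tempered growth condition in force, Corollary \ref{thm2} applies and delivers the necessary conditions of Theorem \ref{thm2.1} for the same radius $R(\cdot)$, completing the argument. I do not expect a genuine obstacle here: the proof is a clean chaining of two earlier results. The one place deserving care is purely bookkeeping---matching the open-ball form of the bounded slope condition produced by Proposition \ref{propositionnew} with the open-ball form demanded by Corollary \ref{thm2}, and recognizing that it is exactly the constancy of $k$ (a consequence of compactness, rather than merely pointwise finiteness of the coderivative) that renders both the integrability requirement and, once $R(t)$ is bounded below by $\delta$, the strengthened tempered growth condition automatic.
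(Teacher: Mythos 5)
Your proposal is correct and follows essentially the same route as the paper's own proof: invoke Proposition \ref{propositionnew} to obtain a constant bounded-slope modulus $k$, observe that $R(t)\geq\delta$ then makes the strengthened tempered growth condition automatic, and conclude via Corollary \ref{thm2}. Your write-up merely spells out the bookkeeping (integrability of the constant modulus and the explicit bound $\hbox{ess\,inf}\,R(t)/k\geq\delta/k>0$) that the paper leaves implicit.
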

\begin{proof} By Proposition \ref{propositionnew}, there exists a constant $k>0$ such that for almost every $t$, for every $(x,v) \in G$ with $x\in B(x_*(t),\varepsilon)$ and $v \in {B}(\dot{x}_*(t),R(t))$, the bounded slope condition holds
$$(\alpha,\beta) \in N_{G}^P(x,v)\Longrightarrow|\alpha| \leq k |\beta|.$$ Since the radius function is bounded away from zero, it follows that the strengthened tempered growth condition holds. Hence all assumptions in Corollary \ref{thm2} holds and so the desired conclusion follows.
\end{proof}

The constraint qualification imposed in Theorem \ref{thm2new} is required to hold for points in a neighborhood of the optimal process $(x_*(t),\dot{x}_*(t))$.
It is natural to ask whether this condition can be imposed only along the optimal process $(x_*(t),\dot{x}_*(t))$.
Inspired by  Clarke and De Pinho \cite[Definition 4.7]{cp}, in order to answer this question we first introduce the following concept.
\begin{defn}
We say that $(t,x_*(t),v)$ is an admissible cluster point of $x_*$ if there exists a sequence $t_i\in [t_0,t_1]$ converging to $t$ and $(x_i,v_i)\in G(t_i)$ such that $\lim x_i=x_*(t)$ and $\lim v_i =\lim \dot{x}_*(t_i) =v$.
\end{defn}
\begin{thm} \label{thm3.3}Assume that $x_*$  is a local minimum of constant radius $R$ of the optimal differential inclusion problem $(P_I)$ where  $\Gamma_t(x)\equiv\Gamma(x)$ is independent of $t$.   Suppose that $\dot{x}_*(t)$  is bounded and that for all  $(t,x_*(t),v)$, admissible cluster point of $x_*$, $D^*\Gamma(x_*(t),v)(0)=\{0\}$. Then the necessary conditions of Theorem \ref{thm2.1} hold as stated, for  some radius $\eta \in (0,R)$.
\end{thm}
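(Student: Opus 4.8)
The plan is to deduce Theorem~\ref{thm3.3} from Theorem~\ref{thm2new} by showing that the constraint qualification, assumed only at the admissible cluster points of $x_*$, automatically propagates to a full neighbourhood tube $C_*^{\varepsilon,\eta}$ at the price of shrinking the radius from $R$ to some $\eta\in(0,R)$. I begin by recording the relevant compactness. Since $\Gamma_t\equiv\Gamma$ and the radius is the constant $R$, we have $G(t)\equiv G$ and, for any $\varepsilon,\eta>0$,
$$C_*^{\varepsilon,\eta}=cl\{(t,x,v): (x,v)\in G,\ |x-x_*(t)|\le\varepsilon,\ |v-\dot x_*(t)|\le\eta\}.$$
Here $t$ ranges over the compact interval $[t_0,t_1]$, $x_*$ is continuous hence bounded, and by hypothesis $\dot x_*$ is bounded; thus the set under the closure is bounded and $C_*^{\varepsilon,\eta}$ is compact. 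This is precisely where the boundedness of $\dot x_*$ is used, replacing the explicit compactness hypothesis of Theorem~\ref{thm2new}.

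The heart of the proof is the claim that there exist $\varepsilon>0$ and $\eta\in(0,R)$ such that $D^*\Gamma(x,v)(0)=\{0\}$ for every $(t,x,v)\in C_*^{\varepsilon,\eta}$. I would argue by contradiction. If no such pair exists, then taking $\varepsilon_i=1/i$ and $\eta_i=\min\{1/i,R/2\}\in(0,R)$, for each $i$ the qualification fails somewhere on $C_*^{\varepsilon_i,\eta_i}$: there is a point $(t_i,x_i,v_i)\in C_*^{\varepsilon_i,\eta_i}$ and a unit vector $\alpha_i$ with $(\alpha_i,0)\in N_G^L(x_i,v_i)$. All these points lie in the compact set $C_*^{\varepsilon_1,R}$, so after passing to a subsequence I may assume $(t_i,x_i,v_i)\to(t^*,x^*,v^*)$ and $\alpha_i\to\alpha$ with $|\alpha|=1$. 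Since $|x_i-x_*(t_i)|\le\varepsilon_i\to0$ and $x_*$ is continuous, this already gives $x^*=x_*(t^*)$.

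The delicate step, which I expect to be the main obstacle, is to recognize $(t^*,x_*(t^*),v^*)$ as an \emph{admissible cluster point}; this is exactly where the choice $\eta_i\to0$ (rather than $\eta_i\to R$) is essential, and where the closure in the definition of $C_*^{\varepsilon,\eta}$ must be handled carefully because $\dot x_*$ is merely measurable. For each $i$ I would select from the pre-closure set a genuine point $(s_i,\xi_i,\nu_i)$ with $(\xi_i,\nu_i)\in G$, $|\xi_i-x_*(s_i)|\le\varepsilon_i$, $|\nu_i-\dot x_*(s_i)|\le\eta_i$, and $|(s_i,\xi_i,\nu_i)-(t_i,x_i,v_i)|<1/i$. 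Then $s_i\to t^*$, $\xi_i\to x_*(t^*)$, $\nu_i\to v^*$, and crucially $|\nu_i-\dot x_*(s_i)|\le\eta_i\to0$ forces $\dot x_*(s_i)\to v^*$; hence $(t^*,x_*(t^*),v^*)$ is an admissible cluster point of $x_*$. Applying the closedness (outer semicontinuity) of the limiting normal cone to $(\alpha_i,0)\in N_G^L(x_i,v_i)$ yields $(\alpha,0)\in N_G^L(x^*,v^*)$, i.e. $\alpha\in D^*\Gamma(x_*(t^*),v^*)(0)$ with $\alpha\neq0$. This contradicts the hypothesis that the qualification holds at every admissible cluster point, and the claim is proved.

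With $\varepsilon$ and $\eta\in(0,R)$ furnished by the claim, it remains to invoke Theorem~\ref{thm2new} with the constant radius $\eta$. A local minimum of radius $R$ is a fortiori a local minimum of every smaller constant radius $\eta\le R$, since the competing arcs admitted for radius $\eta$ form a subfamily of those for radius $R$. Moreover $\eta\ge\delta:=\eta>0$, the set $C_*^{\varepsilon,\eta}$ is compact, and $D^*\Gamma(x,v)(0)=\{0\}$ holds throughout it. Therefore all hypotheses of Theorem~\ref{thm2new} are met for the radius $\eta$, and the necessary conditions of Theorem~\ref{thm2.1} hold as stated for some $\eta\in(0,R)$, as asserted.
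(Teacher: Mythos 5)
Your proposal is correct, and its engine is the same as the paper's: a compactness/contradiction argument in which both tube radii are sent to zero, so that any limit of offending points is forced (via boundedness of $\dot{x}_*$ and continuity of $x_*$) to be an admissible cluster point, contradicting the hypothesis there; then an earlier result is invoked with the shrunken constant radius $\eta$. The difference is in what you propagate and what you cite. The paper propagates the \emph{bounded slope condition} directly: its offending sequence consists of genuine points $(x_i,v_i)\in G$ in \emph{open} tubes $B(x_*(t_i),\rho_i)\times B(\dot{x}_*(t_i),\eta_i)$ carrying proximal normals with $|\alpha_i|>i|\beta_i|$, and after normalizing and passing to the limit it obtains $(\alpha,0)\in N^L_G(x_*(t),v)$ at an admissible cluster point; it then applies Corollary \ref{thm2} (the strengthened tempered growth condition being automatic since $k$ and $\eta$ are constants). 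You instead propagate the coderivative condition $D^*\Gamma(x,v)(0)=\{0\}$ to the closed set $C_*^{\varepsilon,\eta}$ and then invoke Theorem \ref{thm2new} as a black box. Your route is more modular --- it reuses the already-proven Proposition \ref{propositionnew} inside Theorem \ref{thm2new} rather than repeating the proximal-normal argument --- but it pays for this with the extra bookkeeping of your ``delicate step'': because $C_*^{\varepsilon,\eta}$ is a closure and $\dot{x}_*$ is only measurable, you must re-approximate each closure point by genuine pre-closure points to certify that the limit is an admissible cluster point. The paper sidesteps that issue entirely by never taking closures, working with open tubes of actual points of $G$. Both arguments are sound; yours also correctly notes the a fortiori fact that a local minimum of radius $R$ is a local minimum of any smaller constant radius, which the paper uses implicitly when it re-applies Corollary \ref{thm2} with radius $\eta$ and $\varepsilon$ replaced by $\rho$.
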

\begin{proof}
We first verify that there exists a positive constant $k$ such that for almost all $t\in [t_0,t_1]$ and any $(x,v) \in G$ such that $x\in B(x_*(t),\rho), v\in B(\dot{x}_*(t),\eta)$ with $\rho \in (0,\varepsilon)$ and $\eta\in (0,R)$ sufficiently small,
$$(\alpha,\beta) \in N_{G}^P(x,v)\Longrightarrow|\alpha| \leq k |\beta|.$$
 We prove it by contradiction. Suppose that for each $i$ there exist $t_i \in [t_0,t_1]$, $(x_i,v_i) \in G\cap (B(x_*(t_i),\rho_i)\times B(\dot{x}_*(t_i), \eta_i)) $ with $\rho_i\rightarrow 0, \eta_i\rightarrow 0$ such that
$$(\alpha_i,\beta_i) \in N_{G}^P(x_i,v_i)\Rightarrow|\alpha_i| > i |\beta_i|$$
By normalizing and taking subsequences, we may take $|\alpha_i|=1$ and we may suppose that $(t_i,x_i,v_i)\rightarrow (t,x,v)$ and $\alpha_i\rightarrow \alpha$, where $\alpha$ is a unit vector. It follows that $\beta_i\rightarrow 0$. Since $G$ is closed, we have $(x,v)\in G$. Since
\begin{eqnarray*}
|x_i-x_*(t_i)|< \rho_i &\Longrightarrow & x=x_*(t)\\
|v_i-\dot{x}_*(t_i)|<\eta_i &\Longrightarrow & v=\lim v_i=\lim \dot{x}_*(t_i).
\end{eqnarray*}
It follows that $(t,x_*(t),v)$ is an admissible cluster point of $x_*$.   Then in the limit we derive
$$(\alpha, 0)\in N^L_G(x_*(t),v).$$
Equivalently $\alpha \in D^*\Gamma(x_*(t),v)(0)$ with $\alpha\not =0$ which contradicts the assumption that $D^*\Gamma(x_*(t),v)(0)=\{0\}$.

Hence all assumptions in Corollary \ref{thm2} hold with radius $R(\cdot)$ replaced with $\eta$ and $\varepsilon$ with $\rho$ and so the desired conclusion follows by applying Corollary \ref{thm2}.
\end{proof}

Suppose that $\dot{x}_*(\cdot)$ is a continuous function. Then for all $t\in [t_0,t_1]$, we have $\lim \dot{x}_*(t_i)=\dot{x}_*(t) $ and $\dot{x}_*(\cdot)$ is bounded.  In this case the only admissible cluster point of $x_*$ is $(t,x_*(t), \dot{x}_*(t))$ and hence  the constraint qualification is only needed to be verified along the optimal process $(x_*,\dot{x}_*)$.
\begin{corollary} \label{Cor3.2} Assume that $x_*$  is a local minimum of constant radius $R$ of the optimal differential inclusion problem $(P_I)$ where  $\Gamma_t(x)\equiv\Gamma(x)$ is independent of $t$. Moreover suppose that $\dot{x}_*(\cdot)$ is continuous. Then if $D^*\Gamma(x_*(t),\dot{x}_*(t))(0)=\{0\}$, the necessary optimality conditions of Theorem \ref{thm2.1} hold as stated with some radius $\eta \in (0,R)$.
\end{corollary}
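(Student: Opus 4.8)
The plan is to derive Corollary \ref{Cor3.2} as a direct specialization of Theorem \ref{thm3.3}. The key observation is that continuity of $\dot{x}_*(\cdot)$ collapses the notion of admissible cluster point down to the single point lying on the optimal process itself, so that the hypothesis of Theorem \ref{thm3.3} reduces to exactly the stated condition $D^*\Gamma(x_*(t),\dot{x}_*(t))(0)=\{0\}$.

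\begin{proof}
Since $\dot{x}_*(\cdot)$ is continuous on the compact interval $[t_0,t_1]$, it is bounded, so the boundedness hypothesis of Theorem \ref{thm3.3} is satisfied. It remains to identify the admissible cluster points of $x_*$. Let $(t,x_*(t),v)$ be any admissible cluster point. By definition there exist $t_i\rightarrow t$ and $(x_i,v_i)\in G(t_i)$ with $\lim x_i=x_*(t)$ and $v=\lim v_i=\lim \dot{x}_*(t_i)$. By continuity of $\dot{x}_*(\cdot)$, $\lim \dot{x}_*(t_i)=\dot{x}_*(t)$, hence $v=\dot{x}_*(t)$. Thus the only admissible cluster point of $x_*$ at time $t$ is $(t,x_*(t),\dot{x}_*(t))$.

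Consequently, the requirement in Theorem \ref{thm3.3} that $D^*\Gamma(x_*(t),v)(0)=\{0\}$ for every admissible cluster point $(t,x_*(t),v)$ becomes precisely the hypothesis $D^*\Gamma(x_*(t),\dot{x}_*(t))(0)=\{0\}$, which is assumed to hold. All hypotheses of Theorem \ref{thm3.3} are therefore met, and the necessary conditions of Theorem \ref{thm2.1} follow for some radius $\eta\in (0,R)$.
\end{proof}

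The argument is essentially bookkeeping, so I do not expect a genuine obstacle; the only point requiring care is the verification that continuity forces every admissible cluster point to lie on the graph of $\dot{x}_*$, which I would make explicit via the identity $v=\lim\dot{x}_*(t_i)=\dot{x}_*(t)$ as above. One should note that this corollary is strictly a convenience restatement: its entire content is that the pointwise coderivative condition need only be checked along the optimal process $(x_*,\dot{x}_*)$ rather than at all nearby cluster points, which is the practically verifiable form of the constraint qualification.
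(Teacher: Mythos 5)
Your proof is correct and follows exactly the paper's own reasoning: the paragraph preceding Corollary \ref{Cor3.2} in the paper makes the identical observations that continuity of $\dot{x}_*(\cdot)$ on $[t_0,t_1]$ gives boundedness and forces every admissible cluster point to be $(t,x_*(t),\dot{x}_*(t))$, so the result follows directly from Theorem \ref{thm3.3}. No substantive difference between your argument and the paper's.
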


{\bf Remark}:
In the case of free end point, where $E=E_0\times \Re^n$, it follows from the transversality condition that $p(t_1)=0$. Since it follows from the pseudo-Lipschitz condition of $\Gamma_t$ and the Euler inclusion $p$ satisfies
$$|\dot{p}(t)|\leq k(t) |p(t)| \quad a.e. \,t\in [t_0,t_1]$$
if $\lambda_0=0$ then $p(\cdot)=0$ as well. Hence in the case of free end point condition, $\lambda_0$ in the Theorem \ref{thm2.1} and Corollalry  \ref{thm2} and Theorems \ref{thm2new} can be
taken as $1$.

When the initial condition is fixed, that is $x(t_0)=x_0$, the end point is free and $f(x_0,x_1)=f(x_1)$, taking $E=\{x_0\}\times \Re^n$, $\lambda_0$ can be taken as $1$ and the transversality condition becomes
$$-p(t_1) \in \partial^L f(x_*(t_1)).$$

\section{Optimal control with a geometric constraint}

 In this section  we consider   the following optimal control problem with a geometric constraint:
\begin{eqnarray*}
(P_c)~~~~~~\min &&  J(x,u):=\int_{t_0}^{t_1} F(t, x(t), u(t)) dt + f(x(t_0),x(t_1)),\nonumber \\
s.t. && \dot{x}(t)=\phi(t,x(t), u(t)) \,\quad a.e. \,t\in [t_0,t_1],\nonumber\\
&& \Phi(t,x(t), u(t))\in \Omega (t) \quad a.e. \,t\in [t_0,t_1], \label{mconstraint}\\
&& u(t) \in U(t) \quad a.e. \,t\in [t_0,t_1],\nonumber\\
&& (x(t_0), x(t_1))\in E,\nonumber
\end{eqnarray*}
where $F:[t_0,t_1]\times \Re^n\times \Re^m\rightarrow \Re$, $f:\Re^n\times \Re^n\rightarrow \Re$,
$\phi:[t_0,t_1]\times \Re^{n}\times \Re^{m}\rightarrow \Re^n$, $\Phi:[t_0,t_1]\times \Re^{n}\times \Re^m\rightarrow  \Re^d$;
 $U:[t_0,t_1]\rightrightarrows  \Re^m$, $\Omega:[t_0,t_1]\rightrightarrows  \Re^d$ and $E\subset \Re^n \times \Re^n$.   The basic hypotheses on the problem data, in force throughout this section, are the following:
$F, \phi, \Phi$ are Lebesgue measurable in variable $t$ and locally Lipschitz  in variable $(x,u)$; $f$  is locally Lipschitz continuous;
 the set-valued maps $U, \Omega$ are Lebesgue measurable  with closed values.

We say that  $u(t)$ is a {\em control function} if it is
a Lebesgue  measurable function satisfying $u(t) \in U(t)
\mbox{ a.e. }t\in [t_0,t_1]$.
An admissible pair for
$(P_c)$ is a pair of functions $(x(\cdot),u(\cdot))$ on $[t_0,t_1]$ of which $u(\cdot)$ is  a control function and $x(\cdot): [t_0,t_1]\rightarrow \Re^n$
is an arc that satisfies all the constraints.
Let $R(\cdot):[t_0,t_1]\rightarrow (0,+\infty]$ be a given measurable radius function.
As in \cite{cr}, a {\em $W^{1,1}$ local minimum of radius $R(\cdot)$} to problem $(P_c)$ is an admissible pair $(x_{*},u_{*})$ that minimizes the value of the cost
function $J(x, u)$ over all admissible pairs $(x,u)$ which satisfies
$$| \dot{x}(t)-\dot{x}_{*}(t)|\leq R(t) \mbox{ a.e.,} \int_{t_0}^{t_1}|\dot{x}(t)-\dot{x}_{*}(t)|dt\leq \varepsilon,  \| x-x_{*}\|_{\infty}\leq \varepsilon.$$

Similarly as in \cite{cp}, we formulate our problem as a general optimal differential inclusion problem and apply the theory from Clarke \cite{cr} and the results obtained in Section 3.
It is worth to note,  however, that our $W^{1,1}$ local minimum  concept is slightly different from those in \cite{cp} where a  {\em $W^{1,1}$ local minimum of radius $R(\cdot)$} means that $(x_{*},u_{*})$  minimizes  $J(x, u)$ over all admissible pairs $(x,u)$ which satisfies both $| x(t)-x_{*}(t)|\leq \varepsilon$, $|u(t)-u_{*}(t)|\leq R(t) \mbox{ a.e.}$ and
$\int_{t_0}^{t_1}|\dot{x}(t)-\dot{x}_*(t)|dt\leq \varepsilon$. Nevertheless the technique we use can be also used to the solution concepts in \cite{cp} to obtain similar results. Although the technique used in our work is similar to that in \cite{cp}, which  transforms an optimal control problem into an optimal differential inclusion problem, the obtained optimal differential inclusion problem in our work is much easier to deal with. Therefore the proof is much more concise compared with the proof of Theorem 2.1 in \cite{cp}.
Define the perturbed set-valued map  $M_{t}: \Re^d\rightrightarrows \Re^{n+m}$ by
\begin{equation}
M_{t}(y):=\{(x,u)\in \Re^n\times U(t): \Phi(t,x,u)+y\in \Omega(t)\}.\label{calmness}
\end{equation}
The following theorem asserts necessary optimality conditions under  the pseudo-Lipschitz continuity of radius $R(\cdot)$ of the set-valued map (\ref{dynamics}), the strengthened tempered growth condition and the calmness of the set-valued map $M_t$ defined as in (\ref{calmness}).
\begin{thm}\label{thm4.1}Let  $(x_*,u_*)$ be a $W^{1,1}$ local minimum of radius $R(\cdot)$ for $(P_c)$.
 In additions to the basic hypothesis,
 assume that  there exist $\varepsilon>0$ and an uniformly positive integrable function $k$ such that for almost every $t\in [t_0,t_1]$ the following is true:
 for every $x,x'\in B(x_{*}(t),\varepsilon)$, and every
 $u\in U(t)$ with $\Phi(t,x,u)\in \Omega(t)$ and $$|(\phi(t,x,u),F(t,x,u))-(\phi(t,x_*(t),u_*(t)),F(t,x_*(t),u_*(t)))|\leq R(t),$$
there exists
$u'\in U(t)$ with $\Phi(t,x',u')\in \Omega(t)$
such that
 $$|(\phi(t,x,u),F(t,x,u))-(\phi(t,x',u'),F(t,x',u'))|\leq k(t)|x-x'|.$$
Suppose further  that
$R(t)\geq \delta k(t) \ a.e. t\in [t_0,t_1]$  for some $\delta >0$.
Moreover suppose that  the set-valued map $M_{t}$
is calm at $(0,x_*(t),u_*(t))$ for almost every $t\in [t_0,t_1]$.
Then there exist an arc $p$ and   a number $\lambda _{0}$ in $\{0,1\}$,
satisfying the nontriviality condition
$(\lambda _{0},p(t))\neq0, \forall t\in[t_0,t_1]$,
 the transversality condition
$$(p(t_0),-p(t_1)) \in \lambda_0 \partial^Lf(x_*(t_0),x_*(t_1))+N_E^{L}(x_*(t_0),x_*(t_1)),$$
and
the Euler adjoint inclusion for almost every $t$:
\begin{eqnarray}
(\dot{p}(t), 0) \in
&& \hspace{-0.5cm}\partial^C \{\langle -p(t), \phi(t,\cdot,\cdot)\rangle+\lambda_0F(t,\cdot,\cdot)\} ( x_*(t),u_*(t))+\{0\}\times N^C_{U(t)}(u_*(t))\nonumber \\
&& +co\{ \partial^L\langle  \lambda,  \Phi(t,\cdot,\cdot)\rangle( x_*(t),u_*(t)): \lambda\in N_{\Omega(t)}^{L}(\Phi(t,x_*(t),u_*(t))\}, \label{EulerN}
\end{eqnarray}
as well as the Weierstrass condition of radius $R(\cdot)$ for almost every $t$:
\begin{eqnarray*}
&&\Phi(t,x_*(t), u(t))\in \Omega (t), \, \,  |\phi(t,x_*(t), u)-\phi(t, x_*(t),u_*(t))|< R(t)\Longrightarrow\\
&&\langle p(t),\phi(t,x_*(t),u) \rangle-\lambda_{0}F(t,x_*(t),u)\leq \langle p(t), \phi (t,x_*(t),u_*(t)) \rangle -\lambda_{0}F(t,x_*(t),u_*(t)).
\end{eqnarray*}
\end{thm}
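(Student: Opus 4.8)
The plan is to recast $(P_c)$ as an instance of the differential inclusion problem $(P_I)$ and then invoke Theorem~\ref{thm2.1}. I introduce a scalar state $y$ with $\dot y=F(t,x,u)$, $y(t_0)=0$, set $\tilde x=(x,y)$, and define
$$\hat\Gamma_t(x):=\{(\phi(t,x,u),F(t,x,u)):u\in U(t),\ \Phi(t,x,u)\in\Omega(t)\}\subset\Re^{n+1},\qquad \hat G(t):={\rm gph}\,\hat\Gamma_t.$$
The standing data hypotheses make $\hat\Gamma_t$ an ${\cal L}\times{\cal B}$-measurable map with closed graph. With cost $\hat f(\tilde x_0,\tilde x_1)=y_1+f(x_0,x_1)$ and endpoint set $\tilde E=\{(x_0,y_0,x_1,y_1):(x_0,x_1)\in E,\ y_0=0\}$, the pair $(x_*,u_*)$ corresponds to the arc $\tilde x_*=(x_*,y_*)$ with $y_*(t)=\int_{t_0}^tF(s,x_*,u_*)\,ds$. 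If $\tilde x=(x,y)$ is feasible for the inclusion and lies within radius $R$ of $\tilde x_*$, a measurable selection theorem produces a control $u$ realizing the velocity, with $|\dot x-\dot x_*|\le|\dot{\tilde x}-\dot{\tilde x}_*|\le R(t)$ and the $W^{1,1}$ and $L^\infty$ proximities transferring componentwise; optimality of $(x_*,u_*)$ for $(P_c)$ then gives $y(t_1)+f\ge y_*(t_1)+f_*$, so $\tilde x_*$ is a $W^{1,1}$ local minimum of radius $R$ for $(P_I)$.

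Next I verify the hypotheses of Theorem~\ref{thm2.1} for $\hat\Gamma_t$. The displayed Lipschitz-type assumption is exactly the pseudo-Lipschitz condition of radius $R(\cdot)$ (Definition~\ref{def1}) for $\hat\Gamma_t$: a point of $\hat\Gamma_t(x)\cap B(\dot{\tilde x}_*(t),R(t))$ has the form $(\phi,F)(t,x,u)$ with $|(\phi,F)(t,x,u)-(\phi,F)(t,x_*,u_*)|<R(t)$, to which the hypothesis applies, supplying $u'$ within $k(t)|x-x'|$ of $\hat\Gamma_t(x')$. The tempered growth condition (Definition~\ref{def2}) follows from $R(t)\ge\delta k(t)$: since $\dot{\tilde x}_*(t)\in\hat\Gamma_t(x_*(t))$, the pseudo-Lipschitz property places, for $|x-x_*(t)|\le\varepsilon$, a point of $\hat\Gamma_t(x)$ within $\varepsilon k(t)$ of $\dot{\tilde x}_*(t)$; taking $\varepsilon$ small so that $2\varepsilon k(t)\le\lambda R(t)$ for some $\lambda\in(0,1)$, the function $r_0(t):=2\varepsilon k(t)$ is uniformly positive and integrable and serves as the tempered-growth modulus. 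Theorem~\ref{thm2.1} thus yields an arc $\tilde p=(p,q)$ and $\lambda_0\in\{0,1\}$. Because $\hat\Gamma_t$ is independent of $y$, its graph is a product with a free $y$-slot, forcing $\dot q\equiv0$; the $y$-components of the transversality relation force $q(t_1)=-\lambda_0$, whence $q\equiv-\lambda_0$, the remaining components reproduce the stated transversality for $(p,f,E)$, and $(\lambda_0,\tilde p)\neq0$ gives $(\lambda_0,p(t))\neq0$.

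The core is the Euler inclusion. Theorem~\ref{thm2.1} gives $\dot p(t)\in{\rm co}\{\omega:(\omega,p(t),-\lambda_0)\in N^L_{\hat G(t)}(x_*(t),\dot x_*(t),\dot y_*(t))\}$, where $\dot y_*(t)=F(t,x_*,u_*)$. For such $\omega$ I run the proximal-normal and minimization argument of Proposition~\ref{prop2.2}, but now retaining the nonzero velocity multiplier $\pi:=(p(t),-\lambda_0)$: the proximal inequality shows that $(x,u)\mapsto-\langle\omega,x\rangle-\langle\pi,(\phi,F)(t,x,u)\rangle$, plus the quadratic term, is minimized over $M_t(0)$ at $(x_*,u_*)$, and passing to the limit (using local Lipschitzness of $(\phi,F)$ and outer semicontinuity of $\partial^L$ as $\pi_i\to\pi$) yields
$$(\omega,0)\in\partial^L\{\langle-p(t),\phi(t,\cdot,\cdot)\rangle+\lambda_0F(t,\cdot,\cdot)\}(x_*,u_*)+N^L_{M_t(0)}(x_*,u_*).$$
The calmness of $M_t$ together with \cite[Proposition~3.4]{adi} decomposes $N^L_{M_t(0)}(x_*,u_*)$ as $\partial^L\langle\lambda,\Phi(t,\cdot,\cdot)\rangle(x_*,u_*)+\{0\}\times N^L_{U(t)}(u_*)$ for some $\lambda\in N^L_{\Omega(t)}(\Phi(t,x_*,u_*))$; taking the convex hull over $\omega$ then converts ${\rm co}\,\partial^L$ into $\partial^C$, $N^L_{U(t)}$ into $N^C_{U(t)}$, and the union over $\lambda$ into ${\rm co}\{\,\cdot:\lambda\in N^L_{\Omega(t)}(\Phi(t,x_*,u_*))\}$, producing \eqref{EulerN}. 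Finally, the Weierstrass condition of Theorem~\ref{thm2.1}, namely $\langle\tilde p(t),\tilde v-\dot{\tilde x}_*(t)\rangle\le0$ for $\tilde v\in\hat\Gamma_t(x_*(t))\cap B(\dot{\tilde x}_*(t),R(t))$, with $\tilde v=(\phi,F)(t,x_*,u)$ and $\tilde p=(p,-\lambda_0)$, unwinds directly into the stated scalar inequality.

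I expect the main obstacle to be this Euler step. Carrying the proximal-limiting computation of $N^L_{\hat G(t)}$ with a nonzero velocity multiplier (rather than the coderivative-at-zero used in Proposition~\ref{prop2.2}) requires justifying the passage to the limit in $\partial^L\langle-\pi_i,(\phi,F)\rangle$ as $\pi_i\to\pi$, and the calmness-based decomposition of $N^L_{M_t(0)}$ must be arranged to hold for almost every $t$ in a measurable fashion; these measurability and robustness points, rather than the algebra, are the delicate part.
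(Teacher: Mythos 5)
Your proposal is correct and follows essentially the same route as the paper's own proof: recast $(P_c)$ as the differential inclusion problem via a measurable selection (Filippov) argument, verify the pseudo-Lipschitz and tempered growth hypotheses of Theorem \ref{thm2.1} from the stated assumptions (using $R(t)\geq \delta k(t)$ and uniform positivity of $k$), and obtain the Euler adjoint inclusion by the proximal-normal minimization argument followed by the calmness-based decomposition of $N^L_{M_t(0)}$ via \cite[Proposition 3.4]{adi} and passage to Clarke objects through convex hulls. The only difference is organizational: you perform the state augmentation at the outset and carry the velocity multiplier $(p(t),-\lambda_0)$ through the whole computation, whereas the paper first proves the case $F\equiv 0$ and then absorbs the running cost by the same augmentation, leaving those details implicit.
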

\begin{proof} We shall first prove the theorem for the case  $F\equiv 0$.  For each $ t\in [t_0,t_1]$ define a set-valued map
\begin{equation}
\Gamma_t(x):=\{\phi(t,x,u):u\in U(t), \Phi(t,x,u) \in \Omega(t)\}.\label{dynamics}
\end{equation}


Now we prove the optimal control problem $(P_c)$ and the optimal differential inclusion problem $(P_I)$ with the set-valued map $\Gamma_t(x)$ defined as in (\ref{dynamics}) are equivalent under the basic hypotheses.
Let the sets of the admissible arcs of $(P_c)$ and $(P_I)$ be ${\cal A}$, ${\cal B}$ respectively. It is obvious that ${\cal A}\subset {\cal B}$
so we only need to prove ${\cal B}\subset {\cal A}$.
Assume  that $x(\cdot)\in {\cal B}$. Then there exists $u\in U(t)$ such that $$\dot{x}(t)=\phi(t,x(t),u), \  \Phi(t,x(t),u)\in \Omega(t) \qquad a.e. t\in [t_0,t_1].$$
Since $\dot{x}(t)$ can be viewed as the limit of (absolutely) continuous functions sequence $\{n(x(t+\frac{1}{n})-x(t))\}$, $\dot{x}(t)$ is measurable.  Therefore the mappings $\Phi(t,x(t),u)$ and $\dot{x}(t)-\phi(t,x(t), u)$ are measurable in $t$ and continuous in $u$.  Moreover since $\Omega$ is closed valued and measurable,  by virtue of   \cite[Example 14.15(b)]{rw}, the set-valued map
$$I(t):=\{u\in \Re^m:\Phi(t,x(t),u) \in \Omega(t), \dot{x}(t)-\phi(t,x(t), u)=0\}$$
is measurable.
By \cite[Exercise 3.5.2(h)]{clsw}, the set-valued map $I(\cdot)\cap U(\cdot)$ is also a closed-valued measurable set-valued map.
It follows from the Filippov's Lemma (see e.g. \cite[Problem 3.7.20, Page 174]{clsw}) that there exists  a measurable function $u(\cdot):[t_0,t_1]\rightarrow \Re^{m}$
such that $u(t) \in U(t), \Phi(t,x(t),u(t)) \in \Omega(t)$ and $\dot{x}(t)=\phi(t,x(t), u(t))$ for almost all $ t\in [t_0,t_1]$, which proves ${\cal B}\subset {\cal A}$.
Hence the arc $x_*(t)$ is a $W^{1,1}$ local solution of  the optimal  differential inclusion problem $(P_I)$.

By the basic hypotheses, the set-valued maps $U,\Omega$ are Lebesgue measurable, the mappings $\phi(t,x,u)$ and  $\Phi(t,x,u)$ are measurable in variable $t$ and locally Lipschitz in variables $(x,u)$. It follows that  the map $(t,x)\rightarrow \Gamma_{t}(x)$ is ${\cal L}\times {\cal B}$-measurable and the
 graph of $\Gamma_t$  defined as
$$G(t):=\{(x,\phi(t,x,u)): u\in U(t), \Phi(t,x,u)\in \Omega(t)\}$$
is closed for each $t$. With the assumptions and the definition of $\Gamma_t$, it is easy to  verify the pseudo-Lipschitz condition of radius $R(\cdot)$ for $\Gamma_t$.
 By the assumption, we also get that $\frac{R(t)}{k(t)}$ is bounded away from 0,
hence the strengthened tempered growth condition holds. Hence all assumptions for Theorem   \ref{thm2.1} are verified.

By Theorem  \ref{thm2.1},
there exists an arc $p$ and a number $\lambda_0 $ in  $\{0,1\}$ satisfying the nontriviality condition (\ref{nontrivi}),
 the transversality condition (\ref{transv}), the Euler inclusion (\ref{Euler})
and the Weierstrass condition (\ref{Weires}).
Note that the nontriviality condition and the transversality condition in this theorem are exactly  (\ref{nontrivi}) and  (\ref{transv}) respectively and the Weierstrass condition of this theorem can be easily  derived by the Weierstrass condition (\ref{Weires}).
Hence at this point, we have an arc $p$ and a number $\lambda _{0}$ in $\{0,1\}$ satisfying all the required conclusions except that the Euler adjoint inclusion (\ref{EulerN}). We proceed to derive (\ref{EulerN}) from the Euler inclusion (\ref{Euler}). Note that the Euler inclusion
(\ref{Euler}) is equivalent to
\begin{equation}
( \dot{p}(t),0) \in co\left\{(\omega,0): (\omega,p(t))\in N_{G(t)}^{L}(x_*(t),\dot{x}_{*}(t))\right\},\, a.e.\, t\in [t_0,t_1].
\label{EulerNN(}
\end{equation}
We suppose that $t$ is a point where the Euler inclusion holds, the calmness of $M_t$ holds and $ \dot{x}_*(t)=\phi(t,x_*(t),u_*(t))).$
Let $\omega$ be any point satisfying $$(\omega,p(t))\in N_{G(t)}^{L}(x_*(t),\phi(t,x_*(t),u_*(t))).$$ Then there exist sequences
$$(x_{i},\phi(t,x_{i},u_{i}))\xrightarrow{G(t)}(x_*(t),\phi(t,x_*(t),u_*(t)))$$ and
 $(\omega_{i},p_{i})\rightarrow(\omega,p(t))$ such that $(\omega_{i},p_{i})\in N_{G(t)}^{P}(x_i,\phi(t,x_i,u_i))$. By the definition of proximal normal vector, for each
$i$ there exists $\sigma_{i}\geq 0$ such that if $(x,u)\in M_{t}(0)$ we have
$$\langle(\omega_{i},p_{i}),(x,\phi(t,x,u))-(x_{i},\phi(t,x_{i},u_{i}))\rangle\leq \sigma_{i}|(x,\phi(t,x,u))-(x_{i},\phi(t,x_{i},u_{i}))|^{2}.$$
It follows that the function
$$\Lambda(x,u):=\langle-(\omega_{i},p_{i}),(x,\phi(t,x,u))\rangle+ \sigma_{i}|(x,\phi(t,x,u))-(x_{i},\phi(t,x_{i},u_{i}))|^{2}$$
has a  minimum at $(x_{i},u_{i})\in M_{t}(0)$.
Therefore by the Lipschitz continuity of $\phi$ in variable $(x,u)$ and the closedness of the set $M_t(0)$, we have
$$(0,0)\in -(w_i,0)+\partial^L\{\langle -p_i,\phi(t,\cdot,\cdot)\rangle\}(x_{i},u_{i})+N_{M_{t}(0)}^{L}(x_{i},u_{i}).$$
Passing to the limit, by virtue of the Lipschitz continuity of $\Omega$ and the closedness of the limiting normal cone mapping, we get  that
$$(w,0)\in \partial^L\{\langle -p(t),\phi(t,\cdot,\cdot)\rangle\}(x_*(t),u_*(t))+N_{M_{t}(0)}^{L}(x_*(t),u_*(t)).$$
Since the set-valued map $M_{t}$ is calm at $(0,x_*(t),u_*(t))$ and $\Phi$ is locally Lipschitz in variables $(x,u)$, we get that, by \cite[Proposition 3.4]{adi} and \cite[Proposition 2.11]{morduk}
\begin{eqnarray*}
  N_{M_{t}(0)}^{L}(x_*(t),u_*(t))\subset \{ \partial^L\hspace{-0.7cm}&&\langle \lambda, \Phi(t,\cdot,\cdot)\rangle(x_*(t),u_*(t)):\\
&&
\lambda\in N_{\Omega(t)}^{L}(\Phi(t,x_*(t),u_*(t)))\}+N_{\Re^n\times U(t)}^{L}(x_*(t),u_*(t)).
\end{eqnarray*}
By (\ref{EulerNN(}) and the fact that
 the closed convex
hull of $\partial^L, N^{L}$ is $\partial^C,N^{C}$ respectively, we get
\begin{eqnarray*}
(\dot{p}(t),0)\in \partial^C\hspace{-0.7cm}&&\{\langle -p(t),\phi(t,\cdot,\cdot)\rangle\}(x_*(t),u_*(t))+\{0\}\times N_{U(t)}^{C}(u_*(t))+\\
&&co\{ \partial^L\langle \lambda, \Phi(t,\cdot,\cdot)\rangle(x_*(t),u_*(t)):
\lambda\in N_{\Omega(t)}^{L}(\Phi(t,x_*(t),u_*(t)))\},
\end{eqnarray*}
which completes the proof in the case $F\equiv0$. However, in the case  where a nonzero $F$ is present, the added integral cost can be absorbed into the inclusion by
the familiar  technique of state augmentation: we introduce an additional one-dimensional state variable satisfying $\dot{y}(t)=F(t,x(t),u(t))$
together with $y(t_0)=0$. The problem now amounts to minimizing $f(x(t_0),x(t_{1}))+y(t_{1})$ subject to the augmented differential equations.
Then the conclusions of the theorem follow immediately under the assumptions of the theorem by the same arguments as above.
\end{proof}

In the proof of Theorem \ref{thm4.1}, we have shown that  the optimal control problem $(P_c)$ is equivalent to the  optimal differential inclusion problem $(P_I)$ where the set-valued map $\Gamma_t(x)$ is defined as in (\ref{dynamics}) and have translated the assumptions from the control problems to the differential inclusion problems so that the assumptions such as the measurability and the pseudo-Lipschitz continuity  hold for the  set-valued map $\Gamma_t(x)$. The interested reader is referred to \cite{pinho7} for more properties that can be translated from the mixed control problem to the differential inclusion problems.

It is not easy to check the integrability of the function $k$ in Theorem \ref{thm4.1}.
 This difficulty can be dealt with if the function $k$ is constant. We consider the case where the induced differential inclusion problem  is autonomous, i.e.,
 $ F,\phi, \Phi, U, \Omega$ are all independent of $t$, and apply the corresponding results from Section 3. From now on, we suppress the notation $t$ when
 $ F,\phi, \Phi, U, \Omega$ are independent of $t$.
For this purpose we give the following definitions:
For any given $\varepsilon>0$ and a given radius function $R(t)$, define
$$S_*^{\varepsilon,R}(t):=\{(x,u)\in \bar{B}(x_*(t),\varepsilon)\times U:  \Phi(x,u) \in \Omega , |\phi(x,u)-\dot{x}_*(t)|\leq R(t)\}.$$
Let
$$C_*^{\varepsilon,R}=cl\{(t,x,v)\in [t_0,t_1]\times \Re^n\times \Re^n: v=\phi(x,u),(x,u)\in S_*^{\varepsilon,R}(t)\},$$
where $cl$ denotes the closure.

\begin{thm}\label{thm4.2new}  Let  $(x_*,u_*)$ be a $W^{1,1}$ local minimum of radius $R(\cdot)$ for $(P_c)$ where    $U, F,$ $\phi, \Phi,\Omega$ are all independent of $t$.  Suppose that there exists $\delta>0$ such that $R(t)\geq \delta$.
 Suppose that $C_*^{\varepsilon, R}$ is compact and that for  all  $(t,x,u)$ with $(t,x, \phi(x,u))\in C_*^{\varepsilon,R}$, the WBCQ holds:
\begin{eqnarray*}
\left \{ \begin{array}{l}  (\alpha,0)\in \partial^L\langle \lambda(t), \Phi(\cdot,\cdot)\rangle(x,u)+\{0\}\times N_{U}^{L}(u)\\
\lambda(t)\in N_{\Omega}^{L}(\Phi(x,u))\end{array} \right. \Longrightarrow \alpha=0
\end{eqnarray*}
and the mapping $M$ defined as in (\ref{perturbedmap}) is calm at $(0, x,u)$.
Then the necessary optimality conditions of Theorem \ref{thm4.1} hold as stated with the same radius $R(\cdot)$.
\end{thm}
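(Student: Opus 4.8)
The plan is to verify that the hypotheses needed to run the autonomous machinery of Section 3 are in force and then to reuse the translation step of Theorem \ref{thm4.1}; the decisive gain from the autonomous structure is that the Lipschitz modulus can be taken to be a positive \emph{constant}, hence automatically integrable and bounded away from zero. First I would record, exactly as in the proof of Theorem \ref{thm4.1}, that $(P_c)$ is equivalent to the differential inclusion problem $(P_I)$ with the induced (now $t$-independent) map $\Gamma(x)=\{\phi(x,u):u\in U,\ \Phi(x,u)\in\Omega\}$, and that the running cost $F$ is absorbed into an extra state $y$ with $\dot y=F$ by state augmentation. Since the constraint data $\Phi,U,\Omega$ do not involve $\phi$ or $F$, this augmentation leaves both the WBCQ and the calmness of $M$ untouched, and the radius ball controls the augmented velocity $(\phi,F)$, so it suffices to argue in the case $F\equiv0$, where the set $C_*^{\varepsilon,R}$ defined in Section 4 coincides with the one used for the differential inclusion in Section 3.

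Next I would establish the pointwise coderivative condition on the whole compact set. Fix any $(x,u)$ with $(t,x,\phi(x,u))\in C_*^{\varepsilon,R}$. The WBCQ at $(x,u)$ together with the calmness of $M$ at $(0,x,u)$ are precisely the hypotheses of Proposition \ref{prop2.2}, which yields that $\Gamma$ is pseudo-Lipschitz around $(x,\phi(x,u))$; by Proposition \ref{prop2.0}(a) this is equivalent to $D^*\Gamma(x,\phi(x,u))(0)=\{0\}$. As $(x,u)$ ranges over all admissible pairs producing points of $C_*^{\varepsilon,R}$, this gives $D^*\Gamma(x,v)(0)=\{0\}$ for every $(t,x,v)\in C_*^{\varepsilon,R}$.

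With the compactness of $C_*^{\varepsilon,R}$, the pointwise condition $D^*\Gamma(x,v)(0)=\{0\}$ throughout, and the lower bound $R(t)\geq\delta$, all the hypotheses of Theorem \ref{thm2new} are met, so its conclusions — which are exactly those of Theorem \ref{thm2.1} — hold for the equivalent inclusion: the nontriviality condition, the transversality condition, the Euler inclusion (\ref{Euler}), and the Weierstrass condition (\ref{Weires}). Internally this passes through Proposition \ref{propositionnew}, whose normalize-and-extract contradiction argument is what converts the family of pointwise conditions into a single constant $k>0$ realizing the bounded slope condition on $G\cap(B(x_*(t),\varepsilon)\times B(\dot x_*(t),R(t)))$ for a.e. $t$; this constant $k$ is the uniformly positive integrable modulus, and since $k$ is constant the strengthened tempered growth condition follows at once from $R(t)\geq\delta$.

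Finally I would carry out the translation to the control form, which is verbatim the second half of the proof of Theorem \ref{thm4.1}: using the calmness of $M$ one rewrites $N^L_{M(0)}(x_*(t),u_*(t))$ in terms of $\partial^L\langle\lambda,\Phi\rangle$ with $\lambda\in N^L_{\Omega}(\Phi(x_*(t),u_*(t)))$ and $N^L_U(u_*(t))$, thereby converting the Euler inclusion (\ref{Euler}) into the explicit-multiplier Euler adjoint inclusion (\ref{EulerN}); the Weierstrass condition of Theorem \ref{thm4.1} is read off directly from (\ref{Weires}), and nontriviality and transversality carry over unchanged. The main obstacle is the compactness-driven passage from the pointwise conditions $D^*\Gamma(x,v)(0)=\{0\}$ to the single constant $k$ valid uniformly in $t$; the only other delicate point is confirming that the augmentation absorbing $F$ preserves both the compactness of $C_*^{\varepsilon,R}$ and the constraint qualification, which it does because the augmentation alters only the velocity variable and never the constraint functions $\Phi,U,\Omega$.
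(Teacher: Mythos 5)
Your proposal is correct and follows essentially the same route as the paper: reduce to $F\equiv 0$ by state augmentation, pass to the equivalent autonomous differential inclusion, invoke Proposition \ref{prop2.2} together with Proposition \ref{prop2.0}(a) to get $D^*\Gamma(x,\phi(x,u))(0)=\{0\}$ on $C_*^{\varepsilon,R}$, apply Theorem \ref{thm2new} (which internally uses the compactness argument of Proposition \ref{propositionnew} to produce the constant modulus), and then repeat the translation from the Euler inclusion (\ref{Euler}) to the adjoint inclusion (\ref{EulerN}) exactly as in the proof of Theorem \ref{thm4.1}. The only nitpick is terminological: inclusion (\ref{EulerN}) is what the paper calls the implicit form, not the explicit multiplier form, which is reserved for Theorem \ref{New}.
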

\begin{proof} Similarly as in the proof of Theorem \ref{thm4.1}, it suffices to  prove the case $F\equiv 0$. Let
\begin{equation}
\Gamma(x):=\{\phi(x,u): u\in U, \Phi(x,u) \in \Omega\}.\label{dynamic}
 \end{equation} Then as in the proof of Theorem \ref{thm4.1}, problem $(P_c)$ is equivalent to problem $(P_I)$.
By   Proposition \ref{prop2.2}, for all  $(t,x,u)$ with $(t,x, \phi(x,u))\in C_*^{\varepsilon,R}$, the WBCQ plus the calmness of $M$ implies that $\Gamma$ is pseudo-Lipschitz around $(x,\phi(x,u))$. By Proposition \ref{prop2.0}, we get that  $D^* \Gamma(x,\phi(x, u))(0)=\{0\}$. Using the similar proof as in Theorem \ref{thm4.1}, we obtain the result by applying Theorem \ref{thm2new}.
\end{proof}

\begin{defn}\label{Defn4.2}
We say that $(t,x_*(t),v)$ is an admissible cluster point of $x_*$ if there exists a sequence $t_i\in [t_0,t_1]$ converging to $t$ and $u_i\in U(t_i), \Phi(t_i,x_i,u_i)\in  \Omega(t_i)$ such that $\lim x_i=x_*(t)$, and $v=\lim  \phi(t_i,x_i,u_i)=\lim \dot{x}_*(t_i) $.
\end{defn}
\begin{thm}\label{thm4.3} Let  $(x_*,u_*)$ be a $W^{1,1}$ local minimum of constant radius $R$ for $(P_c)$ where  $U, F,\phi,$ $ \Phi,\Omega$ are all independent of $t$.  Suppose that  $\dot{x}_*(t)$  is bounded.
Assume that for every $(x_*(t),u)$ such that $(t,x^*(t), \phi(x^*(t), u))$ is an admissible cluster point of $x_*$,  the WBCQ holds:
\begin{eqnarray*}
\left \{ \begin{array}{l}  (\alpha,0)\in \partial^L\langle \lambda(t), \Phi(\cdot,\cdot)\rangle(x_*(t),u)+\{0\}\times N_{U}^{L}(u)\\
\lambda(t)\in N_{\Omega}^{L}(\Phi(x_*(t),u))\end{array} \right. \Longrightarrow \alpha=0
\end{eqnarray*}
and the map $M$ is calm at $(0, x_*(t),u)$.
Then the necessary optimality conditions of Theorem \ref{thm4.1} hold as stated with some radius $\eta \in (0,R)$. Moreover if  $\dot{x}_*(\cdot)$ is continuous, then the WBCQ and the calmness condition are only required to hold along  $(x_*(t),u_*(t))$.
\end{thm}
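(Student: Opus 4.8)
The plan is to follow the template already established in Theorem \ref{thm3.3} and Corollary \ref{Cor3.2} of Section 3, transferring their differential-inclusion conclusions back to the control problem $(P_c)$ via the equivalence and translation machinery developed in the proof of Theorem \ref{thm4.1}. First I would reduce to the case $F\equiv 0$ exactly as in Theorem \ref{thm4.2new}, since the integral cost can be absorbed by state augmentation. Then I would set $\Gamma(x):=\{\phi(x,u): u\in U,\ \Phi(x,u)\in\Omega\}$ as in (\ref{dynamic}), so that $(P_c)$ becomes the autonomous optimal differential inclusion problem $(P_I)$ with graph $G=gph\,\Gamma$. The goal is then to verify the hypotheses of Theorem \ref{thm3.3}, namely that $\dot{x}_*(t)$ is bounded (assumed) and that $D^*\Gamma(x_*(t),v)(0)=\{0\}$ at every admissible cluster point $(t,x_*(t),v)$ of $x_*$.

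The core of the argument is the translation of the pointwise condition. At a fixed admissible cluster point $(t,x_*(t),v)$, Definition \ref{Defn4.2} guarantees a sequence $u_i\in U(t_i)$ with $\Phi(t_i,x_i,u_i)\in\Omega(t_i)$, $x_i\to x_*(t)$, and $v=\lim\phi(t_i,x_i,u_i)=\lim\dot{x}_*(t_i)$; in the autonomous setting this gives a corresponding control $u$ with $\phi(x_*(t),u)=v$ and $(x_*(t),u)\in M(0)$. I would then invoke Proposition \ref{prop2.2}: the hypothesis that $M$ is calm at $(0,x_*(t),u)$ together with the WBCQ (\ref{cq1section2}) written at $(x_*(t),u)$ yields that $\Gamma$ is pseudo-Lipschitz around $(x_*(t),\phi(x_*(t),u))=(x_*(t),v)$, and hence by Proposition \ref{prop2.0}(a) that $D^*\Gamma(x_*(t),v)(0)=\{0\}$. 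This is precisely the hypothesis required by Theorem \ref{thm3.3}, so its conclusion delivers the necessary conditions of Theorem \ref{thm2.1} for some radius $\eta\in(0,R)$; the explicit multiplier form of the Euler adjoint inclusion (\ref{EulerN}) and the Weierstrass condition then follow by repeating verbatim the coderivative/calmness computation carried out in the proof of Theorem \ref{thm4.1}.

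The main obstacle I anticipate is the bookkeeping of \emph{where} the WBCQ and calmness are assumed versus where they must be verified. The theorem assumes them at all $(x_*(t),u)$ arising from admissible cluster points, so I must be careful that the coderivative condition $D^*\Gamma(x_*(t),v)(0)=\{0\}$ is established at exactly the cluster points Theorem \ref{thm3.3} enumerates, and that the admissible-cluster-point notion of Definition \ref{Defn4.2} matches the one in Theorem \ref{thm3.3} under the identification $v=\phi(x_*(t),u)$. A subtle point is that the control $u$ attaining the limit $v$ need not be unique, so the WBCQ must be postulated at every such $u$; correspondingly, in applying Proposition \ref{prop2.2} I should check that calmness and WBCQ at $(0,x_*(t),u)$ suffice for each admissible cluster point without requiring a neighborhood condition, which is exactly what the stratified formulation of Theorem \ref{thm3.3} permits.

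For the final sentence, when $\dot{x}_*(\cdot)$ is continuous I would argue, exactly as in the paragraph preceding Corollary \ref{Cor3.2}, that $\lim\dot{x}_*(t_i)=\dot{x}_*(t)$ forces $v=\dot{x}_*(t)=\phi(x_*(t),u_*(t))$, so the only admissible cluster point is $(t,x_*(t),\dot{x}_*(t))$. Consequently the WBCQ and the calmness of $M$ need only be checked along the optimal process $(x_*(t),u_*(t))$ itself, and the conclusion follows by invoking Corollary \ref{Cor3.2} in place of Theorem \ref{thm3.3}.
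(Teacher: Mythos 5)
Your proposal is correct and follows essentially the same route as the paper's own proof: reduce to $F\equiv 0$ by state augmentation, pass to the autonomous inclusion $\Gamma$ of (\ref{dynamic}) so that $(P_c)$ and $(P_I)$ are equivalent, use Proposition \ref{prop2.2} (WBCQ plus calmness of $M$) and Proposition \ref{prop2.0} to get $D^*\Gamma(x_*(t),v)(0)=\{0\}$ at admissible cluster points, then apply Theorem \ref{thm3.3} (and Corollary \ref{Cor3.2} when $\dot{x}_*(\cdot)$ is continuous) and repeat the coderivative/calmness translation of the Euler inclusion from the proof of Theorem \ref{thm4.1}. Your identification of the two notions of admissible cluster point via $v=\phi(x_*(t),u)$ rests on the closedness of $gph\,\Gamma$, which is exactly the same implicit step the paper takes.
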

\begin{proof} Similarly as in the proof of Theorem \ref{thm4.1}, it suffices to  prove the case $F\equiv 0$. Let
$\Gamma(x)$ be the set-valued map defined as in (\ref{dynamic}). Then as in the proof of Theorem \ref{thm4.1}, problem $(P_c)$ is equivalent to problem $(P_I)$.
By   Proposition \ref{prop2.2},  the WBCQ plus the calmness of $M$ implies that $\Gamma$ is pseudo-Lipschitz around every $(t,\phi(x_*(t),u))$, an admissible cluster point of $x_*$. By Proposition \ref{prop2.0}, we get that  $D^* \Gamma(x_*(t),\phi(x_*(t),u))(0)=\{0\}$. Using the similar proof as Theorem \ref{thm4.1}, we obtain the result by applying Theorem \ref{thm3.3} and Corollary \ref{Cor3.2}.
\end{proof}


The Euler adjoint inclusion (\ref{EulerN}) in Theorem \ref{thm4.1} is in an implicit form. In the case where $\Phi$ is smooth, it is easy to see that the Euler adjoint inclusion (\ref{EulerN}) takes the form
\begin{eqnarray*}
(\dot{p}(t), 0) \in
&& \hspace{-0.5cm}\partial^C \{\langle -p(t), \phi(t,\cdot,\cdot)\rangle+\lambda_0F(t,\cdot,\cdot)\} ( x_*(t),u_*(t))+\{0\}\times N^C_{U(t)}(u_*(t))\nonumber \\
&& +\{\nabla \Phi(t, x_*(t),u_*(t))^T \lambda: \lambda\in N_{\Omega(t)}^{C}(\Phi(t,x_*(t),u_*(t))\}\nonumber
\end{eqnarray*}
and by using the measurable selection theorem, one can find a measurable multiplier $\lambda(t)\in N_{\Omega(t)}^{C}(\Phi(t,x_*(t),u_*(t))$ such that the Euler adjoint inclusion in an explicit multiplier form \begin{eqnarray*}
(\dot{p}(t), 0) \in
&& \hspace{-0.5cm}\partial^C \{\langle -p(t), \phi(t,\cdot,\cdot)\rangle+\lambda_0F(t,\cdot,\cdot)\} ( x_*(t),u_*(t))+\{0\}\times N^C_{U(t)}(u_*(t))\nonumber \\
&& +\nabla \Phi(t, x_*(t),u_*(t))^T \lambda(t)
\end{eqnarray*}
holds.
Note that in Clarke and de Pinho \cite[Theorems 4.3 and 4.8]{cp},  the Euler adjoint inclusion in an explicit multiplier form were also derived under the assumption that the constraint function $\Phi$ is smooth. In the general nonsmooth case, it is difficult to derive the Euler inclusion in the explicit multiplier form. The difficult lies in that in general $\alpha C+\beta C \subset (\alpha+\beta) C$ may not hold if $\alpha ,\beta$ are real numbers (not all positive) and $C$ is a convex set. However if $\lambda$ in the Euler inclusion (\ref{EulerN}) are all nonnegative, then this difficulty does not exist as  shown in  the following theorem.
\begin{thm}\label{New}  In additions to  the assumptions of Theorems \ref{thm4.1}, \ref{thm4.2new} and \ref{thm4.3},
 suppose that  the mapping $\Phi=(\Phi_1,\Phi_2)$, the set-valued map $\Omega(t)=\Omega_1(t)\times \Omega_2(t)$, $d=d_1+d_2$  where $\Phi_1$ is smooth and $N_{\Omega_2(t)}^{L}(\Phi_{2}(t,x_*(t),u_*(t))\subset \Re_{+}^{d_2}$.
Then the Euler adjoint inclusion can be replaced by the one in the explicit multiplier form, i.e., there exists
 a measurable function $\lambda:[t_0,t_1]\rightarrow \Re^d$ with
$\lambda(t)\in N_{\Omega(t)}^{C}(\Phi(t,x_*(t),u_*(t))$ for almost every $t\in [t_0,t_1]$
satisfying
\begin{eqnarray*}
(\dot{p}(t),0)\in &&\hspace{-0.5cm}\partial^C \{\langle -p(t), \phi(t,\cdot,\cdot)\rangle+\lambda_0F(t,\cdot,\cdot)\} ( x_*(t),u_*(t))+\\
&& \partial^C\Phi(t,x_*(t),u_*(t))^T\lambda(t)+\{0\}\times N_{U(t)}^{C}(u_*(t)).
\end{eqnarray*}
\end{thm}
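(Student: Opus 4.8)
The plan is to start from the Euler adjoint inclusion (\ref{EulerN}) already furnished by Theorems \ref{thm4.1}, \ref{thm4.2new} and \ref{thm4.3}, and to rewrite only its last term
$$T(t):=co\{\partial^L\langle\lambda,\Phi(t,\cdot,\cdot)\rangle(x_*(t),u_*(t)):\lambda\in N_{\Omega(t)}^L(\Phi(t,x_*(t),u_*(t)))\},$$
since the two remaining summands $\partial^C\{\langle-p(t),\phi\rangle+\lambda_0F\}(x_*,u_*)$ and $\{0\}\times N_{U(t)}^C(u_*(t))$ already appear unchanged in the claimed explicit form. Writing $\lambda=(\lambda_1,\lambda_2)$ according to $\Phi=(\Phi_1,\Phi_2)$, the product structure of the limiting normal cone gives $N_{\Omega(t)}^L(\Phi)=N_{\Omega_1(t)}^L(\Phi_1)\times N_{\Omega_2(t)}^L(\Phi_2)$, so $\lambda_2\in\Re_+^{d_2}$ by hypothesis. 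Because $\Phi_1$ is smooth (hence strictly differentiable), the limiting-subdifferential sum rule applies with equality and yields, for each such $\lambda$,
$$\partial^L\langle\lambda,\Phi\rangle(x_*,u_*)=\nabla\Phi_1(x_*,u_*)^T\lambda_1+\partial^L\langle\lambda_2,\Phi_2\rangle(x_*,u_*).$$

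The key step, where the extra hypotheses enter exactly as foreshadowed in the text, is passing the convex hull inside. Using $\partial^L\subset\partial^C$ and Clarke's scalarization (sum and scalar-multiple rules), $\partial^C\langle\lambda_2,\Phi_2\rangle\subset\sum_{j}\lambda_{2j}\partial^C\Phi_{2j}=:\partial^C\Phi_2(x_*,u_*)^T\lambda_2$, so every generator of $T(t)$ lies in $\nabla\Phi_1^T\lambda_1+\partial^C\Phi_2^T\lambda_2$. By Carath\'eodory, a point $c\in T(t)$ is a finite convex combination $c=\sum_k\mu_k\xi_k$ with $\mu_k\geq0$, $\sum_k\mu_k=1$, $\xi_k=\nabla\Phi_1^T\lambda_1^k+\sum_j\lambda_{2j}^k g_{j,k}$, $g_{j,k}\in\partial^C\Phi_{2j}$ and $\lambda^k\in N_\Omega^L(\Phi)$. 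Set $\bar\lambda_1:=\sum_k\mu_k\lambda_1^k$ and $\bar\lambda_2:=\sum_k\mu_k\lambda_2^k$. The smooth part recombines by pure linearity into $\nabla\Phi_1^T\bar\lambda_1$, with no sign restriction needed. For the nonsmooth part the coefficients $\mu_k\lambda_{2j}^k$ are all nonnegative, so for each fixed $j$ the identity $\sum_k(\mu_k\lambda_{2j}^k)\partial^C\Phi_{2j}=(\sum_k\mu_k\lambda_{2j}^k)\partial^C\Phi_{2j}=\bar\lambda_{2j}\partial^C\Phi_{2j}$, valid precisely because $\partial^C\Phi_{2j}$ is convex and the scalars are nonnegative, gives $\sum_k\mu_k\sum_j\lambda_{2j}^k g_{j,k}\in\partial^C\Phi_2^T\bar\lambda_2$. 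Hence $c\in\partial^C\Phi(x_*,u_*)^T\bar\lambda$ with $\bar\lambda=(\bar\lambda_1,\bar\lambda_2)\in co\,N_\Omega^L(\Phi)\subset N_\Omega^C(\Phi)$, so that $T(t)\subset\{\partial^C\Phi(x_*,u_*)^T\lambda:\lambda\in N_{\Omega(t)}^C(\Phi)\}$, where throughout $\partial^C\Phi^T\lambda$ abbreviates $\nabla\Phi_1^T\lambda_1+\sum_j\lambda_{2j}\partial^C\Phi_{2j}$.

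Combining this inclusion with the decomposition of $(\dot p(t),0)$ supplied by (\ref{EulerN}) shows that, for almost every $t$, at least one multiplier in $N_{\Omega(t)}^C(\Phi)$ realizes the claimed explicit inclusion pointwise. The remaining task is to produce a single measurable selection $t\mapsto\lambda(t)$. I would encode the admissible multipliers at time $t$ as
$$\Sigma(t):=\{\lambda\in N_{\Omega(t)}^C(\Phi(t,x_*,u_*)):(\dot p(t),0)\in\partial^C\{\langle-p(t),\phi\rangle+\lambda_0F\}(x_*,u_*)+\partial^C\Phi(t,x_*,u_*)^T\lambda+\{0\}\times N_{U(t)}^C(u_*)\},$$
which is nonempty by the previous paragraph and closed-valued, then verify that $t\mapsto\Sigma(t)$ is a measurable multifunction, using the measurability in $t$ and Lipschitz dependence in $(x,u)$ of the data, the measurability of $t\mapsto(x_*(t),u_*(t),\dot p(t))$, and the closed-graph/measurability properties of $\partial^C$, $N_\Omega^C$ and $N_{U}^C$. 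A measurable selection theorem (in the spirit of \cite[Example 14.15]{rw} and Filippov's lemma, cf. \cite{clsw}) then delivers a measurable $\lambda(\cdot)$ with $\lambda(t)\in\Sigma(t)$ a.e., which is the assertion.

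The conceptual crux is the convex-hull interchange: smoothness of $\Phi_1$ makes its multiplier act linearly regardless of sign, while $N_{\Omega_2}^L\subset\Re_+^{d_2}$ forces $\lambda_2\geq0$, which is exactly what rescues the otherwise false relation $\alpha C+\beta C\subset(\alpha+\beta)C$ for the convex sets $C=\partial^C\Phi_{2j}$. I expect the main technical obstacle to be the measurable-selection step rather than the algebra: one must present $\Sigma(\cdot)$ as a measurable closed-valued multifunction although the data are merely measurable in $t$, which requires care with the composition of the set-valued subdifferential maps and justifies invoking a normal-integrand/measurable-selection framework.
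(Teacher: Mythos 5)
Your proposal is correct and follows essentially the same route as the paper's own proof: Carath\'eodory's theorem, the scalarization/sum rule $\partial^L\langle\lambda,\Phi\rangle\subset\sum_j\lambda_j\partial^C\Phi_j$, the identity $\alpha C+\beta C=(\alpha+\beta)C$ for convex $C$ and nonnegative scalars (Rockafellar) made valid by $\lambda_2\geq 0$, linearity for the smooth block, and a measurable selection to produce $\lambda(\cdot)$. Your explicit definition of the multifunction $\Sigma(t)$ of admissible multipliers is a slightly more careful rendering of the selection step than the paper's (which selects directly from $t\mapsto N^{C}_{\Omega(t)}(\Phi(t,x_*(t),u_*(t)))$ via \cite[Theorem 14.26]{rw} and \cite[Theorem 3.1.1]{c}), but it is the same argument in substance.
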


\begin{proof}The proof for the smooth part is obvious as shown in the discussion before the theorem. For simplicity we assume that there is no smooth part.  Then it suffices to  prove
\begin{eqnarray*}
 co\{ \partial^L\hspace{-0.7cm}&&\langle \lambda, \Phi(t,\cdot,\cdot)\rangle(x_*(t),u_*(t)):\lambda\in N_{\Omega(t)}^{L}(\Phi(t,x_*(t),u_*(t)))\}\\
 &&\subset \{\partial^C\Phi(t,x_*(t),u_*(t))^T\lambda:
\lambda\in N_{\Omega(t)}^{C}(\Phi(t,x_*(t),u_*(t)))\}.
\end{eqnarray*}
By the Caratheodory's theorem, any $$\zeta\in co\{ \partial^L\langle \lambda, \Phi(t,\cdot,\cdot)\rangle(x_*(t),u_*(t)):
\lambda\in N_{\Omega(t)}^{L}(\Phi(t,x_*(t),u_*(t)))\}$$ can be expressed as a convex combination
of $\mbox{n+m+1}$ points at most, namely, there exist  $\varrho_{i}\geq 0, \sum\limits_{i=1}^{n+m+1}\varrho_{i}=1$ and $\lambda_{i}\in N_{\Omega(t)}^{L}(\Phi(t,x_*(t),u_*(t)))$ such that
$$\zeta\in \sum\limits_{i=1}^{n+m+1}\varrho_{i}\partial^L\langle \lambda_{i}, \Phi(t,\cdot,\cdot)\rangle(x_*(t),u_*(t))\subset \sum\limits_{i=1}^{n+m+1}\partial^C\langle \varrho_{i}\lambda_{i}, \Phi(t,\cdot,\cdot)\rangle(x_*(t),u_*(t)).$$
By \cite[Exercise 13.32]{clar}, we get $\zeta\in \sum\limits_{i=1}^{n+m+1} \partial^C\Phi(t,x_*(t),u_*(t))^T(\varrho_{i}\lambda_{i})$.
Since $\lambda_{i}\geq 0$, we get $\zeta \in   \partial^C\Phi(t,x_*(t),u_*(t))^T (\sum\limits_{i=1}^{n+m+1}\varrho_{i}\lambda_{i})$ by \cite[Theorem 3.2]{rwf} where $\sum\limits_{i=1}^{n+m+1}\varrho_{i}\lambda_{i}\in N_{\Omega(t)}^{C}(\Phi(t,x_*(t),u_*(t)))$.

By \cite[Theorem 14.26]{rw}, one can easily get the measurability of the mapping $t\rightarrow N_{\Omega(t)}^{C}(\Phi(t,x_*(t),u_*(t)))$.
Hence we get that, by the measurable selection theorem (see e.g. \cite[Theorem 3.1.1]{c}), there exists a measurable function $\lambda(t)\in N_{\Omega(t)}^{C}(\Phi(t,x_*(t),u_*(t)))$ such that
the Euler adjoint inclusion in the explicit multiplier form holds.
\end{proof}

\section{Optimal control with equality and inequality constraints}
In this section we  consider a special case of problem $(P_c)$ where the mixed constraints are of equality/inequality type defined as follow:
\begin{eqnarray*}
(P_m)~~~~~~\min &&  J(x,u):=\int_{t_0}^{t_1} F( x(t), u(t)) dt + f(x(t_0),x(t_1)),\\
s.t. && \dot{x}(t)=\phi(x(t), u(t)), \,\,\, a.e. \,t\in [t_0,t_1],\\
&& g(x(t),u(t))\leq 0,\,\, \mbox{a.e.} t\in [t_0,t_1],\\
&&h(x(t),u(t))=0,\,\, \mbox{a.e.} t\in [t_0,t_1],\\
&& u(t) \in U, \,\, a.e. \,t\in [t_0,t_1],\\
&& (x(t_0),x(t_1)) \in E,
\end{eqnarray*}
where  $F:\Re^n\times \Re^m\rightarrow \Re$, $f:\Re^n\times \Re^n\rightarrow \Re$,
$\phi: \Re^{n}\times \Re^{m}\rightarrow \Re^n$, $g:\Re^n\times \Re^m\rightarrow \Re^{l}$  are locally Lipschitz, $h:\Re^n\times \Re^m\rightarrow \Re^{s}$  is strictly differentiable and
 $U\subset  \Re^m$,  $E\subset \Re^n\times \Re^n$ are closed.
Let $R:[t_0,t_1]\rightarrow (0,+\infty]$ be a given measurable radius function. An {\em $W^{1,1}$ local minimum of radius $R(\cdot)$} to problem $(P_m)$ is an admissible pair $(x_{*},u_{*})$ that minimizes the value of the cost
function $J(x, u)$ over all admissible pairs $(x,u)$ which satisfies
$$| \dot{x}(t)-\dot{x}_{*}(t)|\leq R(t) \mbox{ a.e.,} \int_{t_0}^{t_1}|\dot{x}(t)-\dot{x}_{*}(t)|dt\leq \varepsilon,  \| x-x_{*}\|_{\infty}\leq \varepsilon.$$
  When $R=+\infty$, this has been referred to as a
{\em $W^{1,1}$ local minimum } to problem $(P_m)$.
Set $\Phi:=(g,h)^{T}$, $\Omega:=\Omega(t)=\Re^{l}_{-}\times \{0\}$ and
\begin{equation}
M(y_1,y_2):=\{(x,u)\in \Re^n\times U: g(x,u)+y_1\leq 0,\, h(x,u)+y_2=0\}. \label{neweqn}
\end{equation}
We now specialize some results in Section 4  to the problem $(P_m)$.
Note that now
$$
C_*^{\varepsilon,R} = cl \left \{
(t,x,\phi(x,u))\in [t_0,t_1]\times \Re^n\times \Re^n: \begin{array}{ll}
 (x,u)\in \bar{B}(x_*(t),\varepsilon)\times U,  g(x,u)\leq 0, \\
h(x,u)=0 , |\phi(x,u)-\dot{x}_*(t)|\leq R(t)
 \end{array} \right \}.
$$
The following theorem results from  Theorems \ref{thm4.2new}  and   \ref{New}.
\begin{thm}\label{thm5.2}  Let $(x_*,u_*)$ be a $W^{1,1}$ local minimum of radius $R(\cdot)$ for $(P_m)$.
Suppose that there exists $\delta>0$ such that $R(t)\geq \delta$. Suppose that $C_*^{\varepsilon, R}$ is compact and that for all $(t,x,u)$ with $(t,x,\phi(x,u))\in C_*^{\varepsilon, R}$,
 the  WBCQ holds:
\begin{eqnarray*}
\left \{ \begin{array}{l}  (\alpha,0)\in \partial^L \{\langle  \lambda, g(\cdot,\cdot)\rangle\}(x,u)+\nabla h(x,u)^T \varpi+\{0\}\times N_{U}^L(u)\\
\lambda \in \Re^{l}_{+},\langle\lambda,g(x,u)\rangle=0,\varpi \in \Re^s\end{array} \right. \Longrightarrow \alpha=0
\end{eqnarray*}
and the set-valued map $M$ defined as in (\ref{neweqn}) is calm at $(0,x,u)$.
Then there exist an arc $p$ and a number $\lambda _{0}$ in $\{0,1\}$ satisfying the nontriviality condition $(\lambda _{0},p(t))\neq0, \forall t\in[t_0,t_1]$
and  measurable functions $\varpi:[t_0,t_1]\rightarrow \Re^{s}$, $\lambda:[t_0,t_1]\rightarrow \Re^{l}_{+}$ with
$$\langle\lambda(t), g( x_*(t),u_*(t))\rangle=0 \mbox{ a.e. }t\in [t_0,t_1]$$
such that $p$ satisfies the transversality condition
$$(p(t_0),-p(t_1)) \in \lambda_0 \partial^Lf(x_*(t_0),x_*(t_1))+N_E^{L}(x_*(t_0),x_*(t_1)),$$
the Euler adjoint inclusion  in the explicit multiplier form for almost every $t$:
\begin{eqnarray*}
(\dot{p}(t), 0) \in
&& \hspace{-0.5cm}-\partial^C  \phi( x_*(t),u_*(t))^Tp(t)+\lambda_{0}\partial^C F( x_*(t),u_*(t))+\\
&& \partial^Cg( x_*(t),u_*(t))^T\lambda(t) +  \nabla h( x_*(t),u_*(t))^T \varpi(t)+\{0\}\times N^C_{U}(u_*(t)),
\end{eqnarray*}
as well as the Weierstrass condition of radius $R(\cdot)$ for almost every $t$:
\begin{eqnarray*}
&&g(x_*(t),u)\leq 0, \, \,
h(x_*(t),u)=0,\,\, |\phi(x_*(t), u)-\phi( x_*(t),u_*(t))|< R(t)\Longrightarrow\\
&& \langle p(t),\phi(x_*(t),u) \rangle-\lambda_{0}F(x_*(t),u)\leq \langle p(t), \phi (x_*(t),u_*(t)) \rangle -\lambda_{0}F(x_*(t),u_*(t)).
\end{eqnarray*}
\end{thm}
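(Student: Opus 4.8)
The plan is to realize $(P_m)$ as the instance of $(P_c)$ obtained by setting $\Phi:=(g,h)^T$ and $\Omega:=\Re^l_-\times\{0\}$, so that Theorems \ref{thm4.2new} and \ref{New} apply verbatim once their abstract hypotheses are rephrased in the equality/inequality language. First I would compute the limiting normal cone to $\Omega$ at a feasible value $\Phi(x,u)=(g(x,u),h(x,u))$ with $g(x,u)\leq 0$, $h(x,u)=0$. Since $\Omega$ is convex, its limiting (and Clarke) normal cone reduces to the usual convex normal cone, namely
$$N_\Omega^L(\Phi(x,u))=\{(\lambda,\varpi):\lambda\in\Re^l_+,\ \langle\lambda,g(x,u)\rangle=0,\ \varpi\in\Re^s\}.$$
Because $h$ is strictly differentiable, the sum rule for the limiting subdifferential gives $\partial^L\langle(\lambda,\varpi),\Phi\rangle(x,u)=\partial^L\langle\lambda,g\rangle(x,u)+\nabla h(x,u)^T\varpi$. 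Substituting these two identities into the abstract WBCQ of Theorem \ref{thm4.2new} shows it coincides exactly with the WBCQ stated here, while the calmness of the map $M$ from (\ref{perturbedmap}) is, for this data, the calmness of $M$ from (\ref{neweqn}). Together with the standing compactness of $C_*^{\varepsilon,R}$ and $R(t)\geq\delta$, all hypotheses of Theorem \ref{thm4.2new} are met.

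Applying Theorem \ref{thm4.2new} then yields the nontriviality, transversality and Weierstrass conditions together with the implicit Euler inclusion (\ref{EulerN}), in which the multiplier $\lambda(t)=(\lambda(t),\varpi(t))$ is measurable with $\lambda(t)\geq 0$; the complementary slackness $\langle\lambda(t),g(x_*(t),u_*(t))\rangle=0$ is precisely the record of membership in the convex normal cone $N_\Omega^C$. To upgrade the implicit Euler inclusion to the asserted explicit multiplier form, I would invoke Theorem \ref{New} with the splitting $\Phi_1:=h$, $\Phi_2:=g$ and $\Omega=\Omega_1\times\Omega_2$, $\Omega_1:=\{0\}\subset\Re^s$, $\Omega_2:=\Re^l_-$. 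The smoothness hypothesis on $\Phi_1$ holds since $h$ is strictly differentiable, and the sign hypothesis $N_{\Omega_2}^L(\Phi_2)\subset\Re^l_+$ holds because the normal cone to the negative orthant at any feasible point consists of nonnegative vectors. Theorem \ref{New} then delivers a measurable $\lambda(t)\in N_{\Omega(t)}^C(\Phi(x_*(t),u_*(t)))$ with the Euler inclusion in the form $(\dot p,0)\in\partial^C\{\langle -p,\phi\rangle+\lambda_0F\}(x_*,u_*)+\partial^C\Phi(x_*,u_*)^T\lambda+\{0\}\times N_U^C(u_*)$.

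The last step is to unpack this single inclusion into the displayed componentwise statement. Using the sum rule for the generalized gradient together with the scalarization $\partial^C\langle -p,\phi\rangle\subset-\partial^C\phi^Tp$, the first term splits as $-\partial^C\phi(x_*,u_*)^Tp+\lambda_0\partial^CF(x_*,u_*)$; writing $\lambda=(\lambda,\varpi)$ and recalling $\partial^Ch=\nabla h$, the term $\partial^C\Phi^T\lambda$ becomes $\partial^Cg(x_*,u_*)^T\lambda+\nabla h(x_*,u_*)^T\varpi$, which is exactly the claimed Euler adjoint inclusion; the transversality and Weierstrass conditions carry over unchanged. I expect the only delicate point to be the normal-cone bookkeeping — confirming that $N_{\Re^l_-}$ produces exactly the nonnegativity-plus-complementarity description and that the strict differentiability of $h$ legitimizes separating the equality multiplier $\varpi$ into the genuine gradient term $\nabla h^T\varpi$ — since everything else is a direct specialization of the already-established Theorems \ref{thm4.2new} and \ref{New}.
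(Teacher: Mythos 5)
Your proposal is correct and follows exactly the paper's own route: the paper proves Theorem \ref{thm5.2} simply by specializing Theorems \ref{thm4.2new} and \ref{New} to $\Phi=(g,h)^T$, $\Omega=\Re^l_-\times\{0\}$, with the normal-cone computation $N^L_\Omega=\{(\lambda,\varpi):\lambda\geq 0,\ \langle\lambda,g\rangle=0\}$ and the splitting into the smooth part $h$ and the part $g$ whose multipliers are nonnegative, precisely as you describe. Your unpacking of the WBCQ, the calmness identification, and the explicit-multiplier Euler inclusion supplies the routine details the paper leaves implicit.
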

When the radius $R\equiv+\infty$, denote by
$$
C_*^{\varepsilon} = cl \left \{
(t,x,\phi(x,u))\in [t_0,t_1]\times \Re^n\times \Re^n: \begin{array}{ll}
 (x,u)\in \bar{B}(x_*(t),\varepsilon)\times U, \\
  g(x,u)\leq 0,
h(x,u)=0
 \end{array} \right \}.
$$ The following global result follows from Theorem \ref{thm5.2}.
\begin{thm}\label{thm5.3}  Let $(x_*,u_*)$ be a $W^{1,1}$ local minimum for $(P_m)$.
Suppose that the set $C_*^{\varepsilon}$ is compact and that for all $(t,x,u)$ with $(t,x,\phi(x,u))\in C_*^{\varepsilon}$,
 the  WBCQ holds:
\begin{eqnarray*}
\left \{ \begin{array}{l}  (\alpha,0)\in \partial^L\{\langle  \lambda, g(\cdot,\cdot)\rangle\}(x,u)+\nabla h(x,u)^T \varpi+\{0\}\times N_{U}^L(u)\\
\lambda \in \Re^{l}_{+},\langle\lambda,g(x,u)\rangle=0, \varpi \in \Re^s\end{array} \right. \Longrightarrow \alpha=0
\end{eqnarray*}
and the set-valued mapping $M$ defined as in (\ref{neweqn}) is calm at $(0,x,u)$.
Then the necessary optimality conditions of Theorem \ref{thm5.2} hold with
the global  Weierstrass condition:  for almost every $t$:
\begin{eqnarray*}
g(x_*(t),u)\leq 0, \, \,
h(x_*(t),u)=0\Longrightarrow \hspace{-0.6cm}&&\langle p(t),\phi(x_*(t),u) \rangle-\lambda_{0}F(x_*(t),u)\\
&&\leq \langle p(t), \phi (x_*(t),u_*(t)) \rangle -\lambda_{0}F(x_*(t),u_*(t)).
\end{eqnarray*}
\end{thm}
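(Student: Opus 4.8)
The plan is to obtain Theorem~\ref{thm5.3} as the special case of Theorem~\ref{thm5.2} in which the radius is taken to be infinite, $R(t)\equiv+\infty$. This is legitimate because the radius function is allowed to take values in $(0,+\infty]$, and, as recorded immediately before the statement, a $W^{1,1}$ local minimum of $(P_m)$ is by definition exactly a $W^{1,1}$ local minimum of radius $R(\cdot)\equiv+\infty$. Thus no change of solution concept is required: I would simply run Theorem~\ref{thm5.2} with this choice of $R$.

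First I would verify that the hypotheses of Theorem~\ref{thm5.2} all hold for $R\equiv+\infty$. The condition $R(t)\geq\delta$ is satisfied trivially (any $\delta>0$ works). Putting $R\equiv+\infty$ in the definition of $C_*^{\varepsilon,R}$ deletes the restriction $|\phi(x,u)-\dot{x}_*(t)|\leq R(t)$, so $C_*^{\varepsilon,R}$ reduces to the set $C_*^{\varepsilon}$; its compactness is then precisely the compactness hypothesis of Theorem~\ref{thm5.3}. For the same reason, the collection of triples $(t,x,u)$ with $(t,x,\phi(x,u))\in C_*^{\varepsilon,R}$ coincides with those having $(t,x,\phi(x,u))\in C_*^{\varepsilon}$, so the WBCQ and the calmness of $M$ assumed on $C_*^{\varepsilon}$ are exactly the assumptions Theorem~\ref{thm5.2} needs.

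Theorem~\ref{thm5.2} then delivers an arc $p$, a number $\lambda_0\in\{0,1\}$, and measurable multipliers $\lambda(\cdot)\in\Re^l_+$ and $\varpi(\cdot)\in\Re^s$ fulfilling the nontriviality and transversality conditions together with the Euler adjoint inclusion in explicit multiplier form; these are verbatim the conclusions asserted in Theorem~\ref{thm5.3}. The only clause that changes is the Weierstrass condition. With $R\equiv+\infty$ its premise $|\phi(x_*(t),u)-\phi(x_*(t),u_*(t))|<R(t)$ reads $|\phi(x_*(t),u)-\phi(x_*(t),u_*(t))|<+\infty$, which holds for every $u\in U$ since $\phi$ is locally Lipschitz and hence finite-valued. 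The radius restriction therefore disappears and the Weierstrass inequality is asserted for all admissible $u$, which is exactly the global Weierstrass condition.

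The step I expect to demand the most care is confirming that the value $+\infty$ is admissible all the way up the chain that supports Theorem~\ref{thm5.2}, i.e.\ through Theorems~\ref{thm4.2new} and \ref{New} and ultimately Theorem~\ref{thm2new}, Proposition~\ref{propositionnew} and Corollary~\ref{thm2}. Concretely one must check that, when $R\equiv+\infty$, Proposition~\ref{propositionnew} still yields a \emph{finite} bounded-slope constant $k$ over the compact set $C_*^{\varepsilon}$ and that the strengthened tempered growth condition $\mathrm{ess\,inf}\{R(t)/k(t)\}>0$ holds trivially. This is the point at which the compactness of the full set $C_*^{\varepsilon}$ (not merely of a smaller radius-restricted set) is essential, because for finite $R$ it is the radius restriction that would otherwise confine the relevant set. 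A limiting argument over finite radii $R_n\uparrow+\infty$ is an alternative, but it would require controlling the dependence of $(p,\lambda_0,\lambda,\varpi)$ on $n$ and extracting convergent subsequences; the direct substitution $R\equiv+\infty$ avoids this entirely.
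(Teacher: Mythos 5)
Your proposal is correct and coincides with the paper's own treatment: the paper obtains Theorem \ref{thm5.3} precisely by specializing Theorem \ref{thm5.2} to the admissible choice $R(\cdot)\equiv+\infty$, under which $C_*^{\varepsilon,R}$ becomes $C_*^{\varepsilon}$, the hypothesis $R(t)\geq\delta$ is trivial, and the radius-restricted Weierstrass condition becomes the global one. Your additional check that $+\infty$ is admissible throughout the supporting chain (finite bounded-slope constant from compactness of $C_*^{\varepsilon}$, trivial strengthened tempered growth) is sound and consistent with the paper's framework.
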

Recall by Definition \ref{Defn4.2} that
$(t,x_*(t),v)$ is an admissible cluster point of $x_*$ if there exists a sequence $t_i\in [t_0,t_1]$ converging to $t$ and $u_i\in U, g(x_i,u_i)\leq 0, h(x_i,u_i)=0$ such that $\lim x_i=x_*(t)$, and $v=\lim  \phi(x_i,u_i)=\lim \dot{x}_*(t_i) $.
By Theorems \ref{thm4.3} and \ref{New}  we have the following local result when the constraint qualification holds only at admissible cluster points.
\begin{thm}\label{thm5.3new} Let  $(x_*,u_*)$ be a $W^{1,1}$ local minimum of constant radius $R$ for $(P_m)$.
Assume that $\dot{x}_*(t)$  is bounded and  that for every $(x_*(t),u)$ such that $(t,x_*(t), \phi(x_*(t), u))$ is an admissible cluster point of $x_*$,  the WBCQ holds at $(x_*(t),u)$ and the  set-valued map $M$ defined as in (\ref{neweqn})  is calm at $(0,x,u)$. Then the necessary optimality conditions in Theorem \ref{thm5.2} hold as stated, with the some radius $\eta\in (0,R)$. Moreover if $\dot{x}_*(\cdot)$ is continuous, then the WBCQ and the calmness condition are only required to hold at $(x_*(t),u_*(t))$.
\end{thm}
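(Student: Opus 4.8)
The plan is to treat $(P_m)$ as the autonomous specialization of $(P_c)$ in which $\Phi:=(g,h)^T$ and $\Omega:=\Re^{l}_{-}\times\{0\}$, and then to run the same two-step argument that produced Theorem \ref{thm5.2}, except that the compact-neighborhood hypothesis of Theorem \ref{thm4.2new} is replaced by the cluster-point hypothesis of Theorem \ref{thm4.3}. As in the proof of Theorem \ref{thm4.1}, it suffices to treat the case $F\equiv 0$, the integral cost being absorbed by state augmentation.

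First I would translate the abstract WBCQ and calmness of Theorem \ref{thm4.3} into the displayed inequality/equality form. The one computation needed is the limiting normal cone to $\Omega=\Re^{l}_{-}\times\{0\}$ at $\Phi(x_*(t),u)=(g(x_*(t),u),h(x_*(t),u))$; since admissibility forces $g(x_*(t),u)\le 0$, the product rule for normal cones together with the standard description of $N^{L}_{\Re^{l}_{-}}$ gives $N^{L}_{\Omega}(\Phi(x_*(t),u))=\{(\lambda,\varpi):\lambda\in\Re^{l}_{+},\ \langle\lambda,g(x_*(t),u)\rangle=0,\ \varpi\in\Re^{s}\}$. Because $h$ is strictly differentiable, the limiting subdifferential sum rule yields $\partial^{L}\langle(\lambda,\varpi),\Phi\rangle(x_*(t),u)=\partial^{L}\langle\lambda,g\rangle(x_*(t),u)+\nabla h(x_*(t),u)^{T}\varpi$, so the abstract WBCQ becomes exactly the implication stated in the theorem. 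The specialized admissible cluster point described just before the theorem is Definition \ref{Defn4.2} with $U(t)\equiv U$ and these choices of $\Phi,\Omega$, and $M$ of (\ref{neweqn}) is $M$ of (\ref{perturbedmap}) written out. With these identifications Theorem \ref{thm4.3} applies and delivers an arc $p$ and $\lambda_0\in\{0,1\}$ obeying the nontriviality, transversality and Weierstrass conditions and the implicit Euler inclusion (\ref{EulerN}), for some radius $\eta\in(0,R)$; its final clause supplies the continuity refinement, namely that when $\dot x_*$ is continuous the hypotheses are needed only along $(x_*(t),u_*(t))$.

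It then remains to convert (\ref{EulerN}) into the explicit multiplier form, which is where I would invoke Theorem \ref{New} with $\Phi_1:=h$ smooth and $\Phi_2:=g$, $\Omega_2:=\Re^{l}_{-}$. Its structural hypothesis $N^{L}_{\Omega_2}(\Phi_2(x_*(t),u_*(t)))\subset\Re^{l}_{+}$ holds because $N^{L}_{\Re^{l}_{-}}(\cdot)\subset\Re^{l}_{+}$, which is precisely the sign information that rescues the convex-combination step (the failure of $\alpha C+\beta C\subset(\alpha+\beta)C$ for signed scalars). Theorem \ref{New} then furnishes a measurable selection from $N^{C}_{\Omega}(\Phi(x_*(t),u_*(t)))$; splitting it into its $g$- and $h$-blocks $(\lambda(t),\varpi(t))$, with $\lambda(t)\in\Re^{l}_{+}$, $\langle\lambda(t),g(x_*(t),u_*(t))\rangle=0$ and $\varpi(t)\in\Re^{s}$ measurable, and letting the $h$-block exit as $\nabla h(x_*(t),u_*(t))^{T}\varpi(t)$ by smoothness, reproduces verbatim the Euler adjoint inclusion of Theorem \ref{thm5.2} after applying $\partial^{C}\langle -p,\phi\rangle\subset -\partial^{C}\phi^{T}p$ and the Clarke sum rule.

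I expect the work to be essentially bookkeeping rather than a genuine obstacle: the substantive inputs are already proved in Theorems \ref{thm4.3} and \ref{New}, and the only load-bearing verifications are the sign computation $N^{L}_{\Re^{l}_{-}}\subset\Re^{l}_{+}$, the complementarity description of the multiplier $\lambda(t)$, and the measurability of the pair $(\lambda(t),\varpi(t))$ returned by the measurable selection theorem.
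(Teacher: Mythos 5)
Your proposal is correct and follows exactly the paper's route: the paper derives this theorem precisely by specializing Theorem \ref{thm4.3} (with $\Phi=(g,h)^T$, $\Omega=\Re^l_-\times\{0\}$, which turns the abstract WBCQ and admissible-cluster-point notions into the displayed forms) and then invoking Theorem \ref{New} with $\Phi_1=h$, $\Phi_2=g$, $\Omega_2=\Re^l_-$ so that $N^L_{\Re^l_-}\subset\Re^l_+$ yields the explicit multiplier form. Your write-up simply makes explicit the bookkeeping (normal-cone product rule, exact sum rule from strict differentiability of $h$, splitting the measurable multiplier into its $g$- and $h$-blocks) that the paper leaves implicit.
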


\section{Conclusions}
The main purpose of this paper is to derive necessary optimality conditions for the optimal control problem with  nonsmooth mixed state and control constraint under the WBCQ plus the calmness condition on the set-valued map $M$ defined as in (\ref{perturbedmap}). Such conditions are weaker than the  MFC and the calibrated constraint qualification.  In order to reach this goal, we first derived some preliminary results in variational analysis. The main result is a sufficient condition for the pseudo-Lipschitz continuity of a set-valued map $\Gamma$ defined as in (\ref{set-valued map}) in terms of the WBCQ plus the calmness condition of $M$. We then derive some new optimality conditions  for optimal   differential inclusion problems.  Finally we apply these optimality conditions to optimal control problems with a geometric constraint to obtain the necessary optimality condition under the WBCQ plus the calmness condition of $M$. Moreover  we derive the Euler inclusion in the explicit multiplier form under the condition that either the multipliers are nonnegative or the constraint functions are nonsmooth. Applying the results to  the case of the classical  equality and inequality constraints we  illustrate the results  obtained.

\section*{Acknowledgments} We thank the anonymous reviewers of this paper for insightful comments that helped us to improve the presentation of the manuscript.


\begin{thebibliography} {99}
\bibitem{abdreabi-haeser--} R. Andreani, G. Haeser, M.L.  Schuverdt and  J.S.  Silva,  {\em A relaxed constant positive linear dependence constraint
qualification and applications},  Math. Program. {\bf 135}(2012), pp. 255-273.

\bibitem{abdeabi--twoCQ} R. Andreani, G.  Haeser, M.L. Schuverdt and J.S.
 Silva,   {\em Two new weak constraint qualification and applications}, SIAM J. Optim. {\bf 22}(2012),
 pp. 1109--1135.

\bibitem{Andreani-cpldcq} R. Andreani,  J. M.   Martinez and M.L.  Schuverdt, {\em On the relation between constant
positive linear dependence condition and quasinormality constraint
qualification}, J. Optim. Theory  Appl. {\bf 125}(2005), pp. 473--485.




\bibitem{abdeabi--silva} R. Andreani and  J.S.
 Silva, {\em Constant rank constraint qualifications: a geometric introduction}, Pesquisa Operacional, {\bf 34}(2014), pp. 481-494.



\bibitem{aubin} J.P. Aubin, {\em Lipschitz behavior of solutions to convex minimization problems}, Math. Oper. Res. {\bf 9}(1994), pp. 87-111.




\bibitem{b}  P. Bettiol, A. Boccia and R.B. Vinter. Stratified necessary conditions for differential inclusions with state constraints. SIAM J.
Control Optim., {\bf 51}(2013), pp. 3903-3917.

\bibitem{c} F.H. Clarke, {\em Optimization and Nonsmooth Analysis}, Wiley-Interscience,
New York, 1983.

\bibitem{cr} F.H. Clarke, {\em Necessary conditions in dynamic optimization}, Mem. Amer. Math. Soc.  {\bf 173}, AMS, Providence, RI, 2005

\bibitem{clarke} F.H. Clarke,  {\em A general theorem on necessary conditions in optimal control}, Discrete Cont. Dyn. {\bf 29}(2011), pp. 485-503.

\bibitem{clar} F.H. Clarke, {\em Functional Analysis, Calculus of variations and Optimal Control}, Spring, London, 2013.

\bibitem{cp} F.H. Clarke and M.R. de Pinho, {\em Optimal control problems with mixed constraints}, SIAM J. Control Optim. {\bf 48}(2010), pp. 4500-4524.

\bibitem{clsw} F.H. Clarke, Yu.S. Ledyaev, R.J. Stern and P.R. Wolenski, {\em Nonsmooth Analysis and Control Theory}, Springer, New York, 1998.

\bibitem{pinho1} M.R. de Pinho, {\em Mixed constrained control problems}, J. Math. Anal. Appl. {\bf 278}(2003), pp. 293-307.

\bibitem{pinho2} M.R. de Pinho, M.M.A. Ferreira, and F.A.C.C. Fontes, {\em Unmaximized inclusion
necessary conditions for nonconvex constrained optimal control problems}, ESAIM Control
Optim. Calc. Var. {\bf 11}(2005), pp. 614-632.

\bibitem{pinho3} M.R. de Pinho and A. Ilchmann, {\em Weak maximum principle for optimal control problems
with mixed constraints}, Nonlinear Anal. {\bf 48}(2002), pp. 1179-1196.

\bibitem{pinho7} M.R. de Pinho and I. Kornienko,  {\em Differential Inclusion Approach for Mixed Constrained Problems Revisited},
Set-Valued Var. Anal. {\bf 23}(2013), pp. 425-441.


\bibitem{pinho4} M.R. de Pinho and J.F. Rosenblueth, {\em Necessary conditions for constrained problems under
Mangasarian-Fromowitz conditions}, SIAM J. Control Optim. {\bf 47}(2008), pp. 535-552.

\bibitem{pinho-silva} M.R. de Pinho and G.N. Silva, {\em Optimal control problems with nonsmooth mixed constraints}, Proceedings of the $47$th IEEE Conference on Decision and  Control, Cancun, Mexico, Dec. 9-11, 2008 pp. 4781-4786.

\bibitem{pinho5} M.R. de Pinho and R.B. Vinter, {\em An Euler-Lagrange inclusion for optimal control problems}, IEEE Trans. Automat. Control
{\bf 40}(1995), pp. 1191-1198.



\bibitem{pinho6} M.R. de Pinho, R.B. Vinter, and H. Zheng, {\em A maximum principle for optimal control
problems with mixed constraints}, IMA J. Math. Control Inform. {\bf 18}(2001), pp. 189-205.

\bibitem{ledyaev}  E. N. Devdariani and Y. S. Ledyaev, {\em Maximum principle for implicit control systems}, Appl. Math. Optim. {\bf 40}(1999), pp. 79-103.

\bibitem{dmitruk} A.V. Dmitruk, {\em Maximum principle for a general optimal control problem with state and
regular mixed constraints}, Comput. Math. Model. {\bf 4}(1993), pp. 364-377.

\bibitem{dub} A. Ya. Dubovitskii and A. A. Milyutin, {\em Theory of the principle of the maximum}, in Methods of the Theory of Extremal Problems in Economics, Nauka, Moscow,
1981, pp. 7-47.

\bibitem{guolinye-stability} {L. Guo, G.H. Lin and J.J. Ye}, {\em Stability analysis for parametric mathematical programs with geometric constraints and its
applications}, SIAM J.  Optim. {\bf 22} (2012), pp. 1151--1176.
\bibitem{guoyezhang} {L. Guo, J.J. Ye and J. Zhang}, {\em Mathematical programs with geometric constraints in Banach spaces: enhanced optimality, exact penalty, and sensitivity},  SIAM J. Optim. {\bf 22}(2012), pp. 1151-1176.

\bibitem{hen} R. Henrion and J.V. Outrata, {\em  Calmness of constraint systems with applications}, Math. Program. Ser. B {\bf 104}(2005),  pp. 437-464.
\bibitem{Hestenes} M.R. Hestenes, {\em Calculus of Variations and Optimal Control Theory}, John Wiley \& Sons, New York, 1966.



\bibitem{adi} A.D. Ioffe and  J.V. Outrata, {\em On metric and calmness qualification in subdifferential calculus}, Set-Valued Anal. {\bf 16}(2008), pp. 199-227.

\bibitem{con} A. Jourani, {\em Constraint qualifications and Lagrange multipliers in nondifferentiable programming problems},  J. Optim. Theory Appl., {\bf 81}(1994), pp. 533-548.

\bibitem{makowski} K. Markowski and L.W. Neustadt, {\em Optimal control problems with mixed control-phase variable equality and inequality constraints}, SIAM J. Control, {\bf 12}(1974), pp. 184-228.

\bibitem{Milyutin} A.A. Milyutin and N.P. Osmolovskii, {\em Calculus of Variations and Optimal Control}, AMS, Providence, RI, 1998.

\bibitem{morduk} B.S. Mordukhovich, {\em Generalized differential calculus for nonsmooth and set-valued mappings}, J. Math. Anal. Appl. {\bf 183}(1994), pp. 250-288.

\bibitem{m2} B.S. Mordukhovich, {\em Variational Analysis and Generalized Differentiation, Vol. I: Basic Theory}, Springer, 2004.


\bibitem{Neustadt} L.W. Neustadt, {\em Optimization}, Princeton University Press, Princeton, NJ. 1976.

\bibitem{zeidan} Z. Pales and V. Zeidan, {\em Optimal control problems with set-valued control and state constraints}, SIAM J. Optim. {\bf 14}(2003), pp. 334-358.

\bibitem{J-SPang} J-S. Pang and D.E. Stewart, {\em Differential variational inequalities}, Math. Program. Ser. A, {\bf 113}(2008), pp. 345-424.
 \bibitem{Qi-Wei} L.  Qi and Z. X.  Wei, {\em On the constant positively linear dependence condition and its application to SQP
methods}. SIAM J.  Optim. {\bf 10}(2000), pp. 963--981.
\bibitem{robin} S.M. Robinson, {\em Stability theory for systems of inequalities. part I: linear systems,} SIAM J. Numer. Anal.  {\bf 12}(1975), pp. 754-769.

\bibitem{robin1} S.M. Robinson, {\em Some continuity properties of polyhedral multifunctions}, Math. Program. Stud. {\bf 14} (1981), pp. 206-214.

\bibitem{rwf} R.T. Rockafellar, {\em Convex Analysis}, Princeton, New Jersey, 1970.

\bibitem{rw} R.T. Rockafellar and R. J-B. Wets, {\em Variational Analysis}, Springer, Berlin, 1998.

\bibitem{schwar} A.B. Schwarzkopf, {\em Optimal controls with equality state constraints}, J. Optim. Theory Appl., {\bf 19}(1976), pp. 455-468.

\bibitem{Stefani} G. Stefani and P. Zezza, {\em Optimality conditions for a constrained control problem}, SIAM J. Control Optim. {\bf 34}(1996), pp. 635-659.

\bibitem{Wu-Ye01} Z. Wu and J.J. Ye, {\em Sufficient conditions for error bounds}, SIAM J. Optim.  {\bf 12}(2001), pp. 421-435.



\bibitem{Wu-Yemp} Z. Wu and J.J. Ye, {\em On error bounds for lower semicontinuous functions}, SIAM J. Optim.  {\bf 14}(2002), pp. 621-645.


\bibitem{Wu-Ye03} Z. Wu and J.J. Ye, {\em First-order and second-order conditions for error bounds}, SIAM J. Optim.  {\bf 14}(2003), pp. 621-645.



\bibitem{y01} J.J. Ye, {\em Necessary conditions for bilevel dynamic optimization problems}, SIAM J. Control Optim. {\bf 33}(1995), pp. 1208-1223.

\bibitem{y02} J.J. Ye, {\em Optimal strategies for bilevel dynamic problems}, SIAM J. Control Optim. {\bf 35}(1997), pp.  512-531.


\bibitem{y01new} J.J. Ye, {\em Multiplier rules under mixed assumptions of differentiability and Lipschitz continuity}, SIAM J. Control Optim. {\bf 39}(2001), pp. 1441-1460.



\bibitem{y05} J.J. Ye and X.Y. Ye, {\em  Necessary optimality conditions for optimization problems with variational inequality constraints},
 Math. Oper. Res. {\bf 22}(1997), pp. 977-997.


\bibitem{Jane-zhu-zhu-siopt1997} J.J. Ye, D.L. Zhu and Q.J. Zhu, {\em Exact penalization and necessary optimality conditions for generalized bilevel programming problems}, SIAM J. Optim., 7 (1997), 481--507.




























%
%
%














%



















%
%
















\end{thebibliography}
\end{document}